\newtheorem{remark}[theorem]{Remark}
\begin{document}

\title{CONTROLLABILITY NEAR A HOMOCLINIC BIFURCATION}
\author{Fritz Colonius\\Institute of Mathematics, University of Augsburg, Augsburg, Germany
\and Amani Hasan\\School of Mathematics, Statistics, and Computer Science, College of Science,
University of Tehran, Iran \& Faculty of Mechanical and Electrical
Engineering, Damascus University, Syria
\and Gholam Reza Rokni Lamouki\\School of Mathematics, Statistics, and Computer Science, College of Science,
University of Tehran, Iran}
\maketitle

\textbf{Abstract:} Controllability properties are studied for control-affine
systems depending on a parameter $\alpha$ and with constrained control values.
The uncontrolled systems in dimension two and three are subject to a
homoclinic bifurcation. This generates two families of control sets depending
on a parameter in the involved vector fields and the size of the control
range. A new parameter $\beta$ given by a split function for the homoclinic
bifurcation determines the behavior of these control sets. It is also shown
that there are parameter regions where the uncontrolled equation has no
periodic orbits, while the controlled systems have periodic solutions
arbitrarily close to the homoclinic orbit.\medskip

\textbf{Key words:} controllability, control set, homoclinic bifurcation

\textbf{MSC\ 2020: }93C15, 37G15, 93B05, 34C37

\section{Introduction}

Complete controllability is a rare occurrence for nonlinear systems with
restricted control range. Hence the (maximal) regions in the state space where
complete controllability holds, i.e., control sets, are of interest, cf.
Definition \ref{def:Dset}. A basic reference is Colonius and Kliemann
\cite{ColK00}. The present paper studies control sets for parameter dependent
systems in dimension two and three near a homoclinic bifurcation.

Several results for control sets near local bifurcations are available. For
transcritical and pitchfork bifurcations in the one-dimensional case and for
Hopf bifurcations, cf. \cite[Section 8.3 and Section 9.3]{ColK00}, also for
applications to physically relevant systems and further references. Lamb,
Rasmussen, and Rodrigues \cite{LamRR15} develop a topological bifurcation
theory for minimal invariant sets (which coincide with invariant control sets)
of set-valued dynamical systems. The only contribution for control sets near a
homoclinic bifurcation is due to H\"{a}ckl and Schneider \cite{HS} who study
systems when the uncontrolled two-dimensional system is obtained by the
universal unfolding of a Takens-Bogdanov singularity. The relation of our
results to \cite{HS} is discussed in more detail in Remark \ref{Remark_HS1}
and Remark \ref{Remark_HS2}. Control sets near homoclinic and heteroclinic
orbits are also of relevance in the study of models for ship roll motion, cf.
Gayer \cite{Gayer04, Gayer05} and Colonius, Kreuzer, Marquardt, and Sichermann
\cite{ColKMS2008}. While in the latter references the uncontrolled and
unperturbed system is Hamiltonian, the present paper considers non-Hamiltonian
cases. We use the monograph Kuznetsov \cite{Y1998} as a basic reference for
homoclinic bifurcations, cp. also Guckenheimer and Holmes \cite{GucH83} and
Wiggins \cite{Wiggins1988}.

We consider control-affine systems in $\mathbb{R}^{d}$ of the form%
\begin{equation}
\dot{x}(t)=f_{0}(\alpha,x(t))+\sum_{i=1}^{m}u_{i}(t)f_{i}(\alpha,x(t)),u(t)\in
U\text{ with }0\in U\subset\mathbb{R}^{m}, \label{1.1}%
\end{equation}
with parameter $\alpha\in\mathbb{R}$. The term $u(\cdot)$ can be interpreted
as a control or as a time-dependent deterministic perturbation. The invariant
control sets are also of relevance for the analysis of associated degenerate
Markov diffusion processes, where $u$ is replaced by a random disturbance, cf.
Kliemann \cite{Kli87}. Furthermore, control sets are of interest in connection
with minimal data rates for control systems, since their invariance entropy
can be computed, cf. Kawan and Da Silva \cite{KawaDS16}.

For the uncontrolled system $\dot{x}=f_{0}(\alpha_{0},x)$ and dimension $d=2$,
a classical theorem due to Andronov and Leontovich completely describes the
bifurcation of an orbit homoclinic to an hyperbolic equilibrium $x_{0}$. The
saddle quantity $\sigma_{0}$ (the sum of the eigenvalues of $\frac{\partial
f_{0}}{\partial x}(\alpha_{0},x_{0})$) and the sign of a parameter $\beta$
given by a split function determine the direction of the bifurcation and the
stability properties of the periodic orbits. It turns out that also the
properties of control sets for (\ref{1.1}) are determined by this new
parameter $\beta$ (instead of $\alpha$).\ Furthermore, a main result of this
paper shows that the qualitative behavior of the control system can be
different from the behavior of the uncontrolled system: There are parameter
regions where there is no homoclinic orbit and no limit cycle for the
uncontrolled systems while there exist periodic orbits of the control system
arbitrarily close to the homoclinic orbit. The analysis of homoclinic
bifurcations of systems in $\mathbb{R}^{3}$ goes back to the work by L.P.
Shil'nikov. We will only consider the cases, where unique periodic orbits
bifurcate, the much more complicated case where, in particular, countably many
periodic orbits occur is left for future work.

The contents of this paper are as follows. In Section \ref{Section2}, we
introduce notation used for homoclinic bifurcations and cite relevant results
in dimension two and three. Section \ref{Section3} recalls properties of
control sets and their parameter dependence, when the control range is
perturbed or an external parameter occurs in the vector fields. Section
\ref{Section4} starts with a discussion of the control sets near an orbit
homoclinic to a hyperbolic equilibrium in dimension $d=2$. In dimension $d=3$,
we analyze the cases where the equilibrium is a saddle, and a saddle-focus
with saddle quantity $\sigma_{0}<0$. Section \ref{Section5} presents an
example including numerical results which are based on H\"{a}ckl's algorithm
(H\"{a}ckl \cite{G1992}). We remark that an alternative for computing control
sets are set oriented methods, cf. Szolnoki \cite{Szol03}. Finally, Section
\ref{Section6} draws some conclusions.

\textbf{Notation.} The Hausdorff distance between two compact subsets
$A,B\subset\mathbb{R}^{d}$ is $d_{H}(A,B)=\max(\max_{a\in A}\min\{\left\Vert
a-b\right\Vert \left\vert b\in B\right.  \},\max_{b\in B}\min\{\left\Vert
a-b\right\Vert \left\vert a\in A\right.  \})$. The ball of radius $\delta>0$
around $x\in\mathbb{R}^{d}$ is $\mathbf{B}(x,\delta)=\{y\in\mathbb{R}%
^{d}\left\vert \left\Vert x-y\right\Vert <\delta\right.  \}$. It is convenient
to write (as Kuznetsov \cite{Y1998}) $\Gamma_{0}\cup x_{0}$ for the union of
$\{x_{0}\}$ with an orbit $\Gamma_{0}$ homoclinic to $x_{0}$.

\section{Bifurcation of orbits homoclinic to hyperbolic
equilibria\label{Section2}}

This section introduces some notation and cites results on the bifurcation of
orbits which are homoclinic to hyperbolic equilibria. This is done for planar
systems in the first subsection and for three-dimensional systems in the
second subsection \ We rely on the presentation in Kuznetsov \cite[Chapter
6]{Y1998}.

Consider a parameter dependent family of ordinary differential equations in
$\mathbb{R}^{d}$ of the form
\begin{equation}
\dot{x}(t)=f(\alpha,x(t)), \label{2.1}%
\end{equation}
where $f:\mathbb{R}\times\mathbb{R}^{d}\rightarrow\mathbb{R}^{d}$ is a smooth
($C^{\infty}$-)function. We assume that for every $\alpha\in\mathbb{R}$ and
every initial value $x\in\mathbb{R}^{d}$ there exists a unique solution
$\psi^{\alpha}(t,x),t\in\mathbb{R}$, and that all maps $\psi^{\alpha}%
(t,\cdot):\mathbb{R}^{d}\rightarrow\mathbb{R}^{d},t\in\mathbb{R}$, are
continuous. An orbit $\Gamma_{x_{0}}^{\alpha}:=\{\psi^{\alpha}(t,x)\left\vert
t\in\mathbb{R}\right.  \}$ is called homoclinic to an equilibrium point
$x_{0}$ (i.e., $f(\alpha,x_{0})=0$) if $\psi^{\alpha}(t,x)\rightarrow x_{0}$
as $t\rightarrow\pm\infty$. Let
\begin{align*}
W^{\alpha,s}(x_{0})  &  =\{y\in\mathbb{R}^{d}\left\vert \psi^{\alpha
}(t,y)\rightarrow x_{0}\text{ for }t\rightarrow\infty\right.  \},\\
W^{\alpha,u}(x_{0})  &  =\{y\in\mathbb{R}^{d}\left\vert \psi^{\alpha
}(t,y)\rightarrow x_{0}\text{ for }t\rightarrow-\infty\right.  \},
\end{align*}
denote the stable and the unstable manifold, resp., of $x_{0}$.

\subsection{The planar case}

In this subsection we cite a classical theorem due to Andronov and Leontovich
on the bifurcation in the plane of orbits which are homoclinic to hyperbolic equilibria.

Suppose that system (\ref{2.1}) is planar ($d=2$) having for $\alpha_{0}=0$ a
saddle equilibrium $x_{0}=0$, i.e., $f_{x}(0,0)=\frac{\partial}{\partial
x}f(0,0)$ has a positive and a negative eigenvalue, $\lambda_{1}%
(0)<0<\lambda_{2}(0)$. Assume that $\Gamma_{0}$ is an orbit which is
homoclinic to $x_{0}$. For $\alpha$ sufficiently close to $\alpha_{0}=0$, the
implicit function theorem implies that there are saddle equilibria $x_{\alpha
}$ with eigenvalues $\lambda_{1}(x_{\alpha})<0<\lambda_{2}(x_{\alpha})$
depending continuously on $\alpha$.

Let $\Sigma$ be a (one-dimensional) local cross-section to the stable manifold
near the saddle. Select a coordinate $\xi\in\mathbb{R}$ along $\Sigma$ such
that the point of its intersection with the stable manifold $W^{\alpha_{0}%
,s}(x_{0})$ corresponds to $\xi=0$. This coincides with the point of
intersection with the unstable manifold $W^{\alpha_{0},u}(x_{0})=W^{\alpha
_{0},s}(x_{0})=\Gamma_{0}$. For all $\alpha$ sufficiently close to $\alpha
_{0}=0$, $\Sigma$ is also a local transversal section to the unstable
manifolds $W^{\alpha,u}(x_{\alpha})$. Denote by $\xi^{u}(\alpha)$ the $\xi
$-value of the intersection of $W^{\alpha,u}(x_{\alpha})$ with $\Sigma$. The
scalar function $\alpha\mapsto\beta(\alpha):=\xi^{u}(\alpha)$ which is defined
on a neighborhood of $\alpha_{0}=0$ is called a \emph{split function}. The
function $\beta(\cdot)$ is smooth and it is injective if $\beta^{\prime
}(0)\not =0$.

In the planar case considered here, the homoclinic bifurcation is
characterized by the following theorem due to Andronov and Leontovich, cf.
Kuznetsov \cite[Theorem 6.1]{Y1998}. Other references include Guckenheimer and
Holmes \cite[Theorem 6.1.1]{GucH83}, Wiggins \cite[Theorem 3.2.11]%
{Wiggins1988}.

\begin{theorem}
\label{th:Andr}Consider a parameter dependent two-dimensional system of the
form (\ref{2.1}) having at $\alpha_{0}=0$ an orbit $\Gamma_{0}$ which is
homoclinic to a saddle $x_{0}=0$ with eigenvalues $\lambda_{1}(0)<0<\lambda
_{2}(0)$. Assume that the following conditions hold:

(H1)\thinspace\ $\sigma_{0}=\lambda_{1}(0)+\lambda_{2}(0)\neq0$;

(H2)\thinspace\ $\beta^{^{\prime}}(0)\neq0$, where $\beta(\alpha)$ is a split function.

Then, there exist $\overline{\alpha}>0$ and a neighborhood $U_{0}$ of
$\Gamma_{0}\cup x_{0}$ in which for all $\left\vert \alpha\right\vert
<\overline{\alpha}$ a unique limit cycle $L_{\beta(\alpha)}$ bifurcates from
$\Gamma_{0}$. The limit cycle exists and is asymptotically stable for
$\beta>0$ if $\sigma_{0}<0$, and exists and is unstable for $\beta<0$ if
$\sigma_{0}>0$.
\end{theorem}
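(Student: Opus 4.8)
The plan is to reduce the bifurcation analysis to a one-dimensional return map on the cross-section $\Sigma$ and track the fixed points of this map as $\alpha$ (equivalently $\beta$) varies. First I would straighten the local picture near the saddle $x_\alpha$: by the implicit function theorem the saddle and its local stable and unstable manifolds depend smoothly on $\alpha$, and after a smooth change of coordinates the linear part can be put in the form $\dot\xi = \lambda_1(\alpha)\xi$, $\dot\eta = \lambda_2(\alpha)\eta$ with $\lambda_1(\alpha)<0<\lambda_2(\alpha)$. Choosing two small transversal sections $\Sigma^{\mathrm{in}}$ (transversal to the stable direction) and $\Sigma^{\mathrm{out}}$ (transversal to the unstable direction), one estimates the \emph{local} transition map $\Delta_{\mathrm{loc}}\colon \Sigma^{\mathrm{in}}\to\Sigma^{\mathrm{out}}$ by integrating the (nearly linear) flow; the standard estimate gives, in suitable coordinates, $\eta_{\mathrm{out}} \sim (\xi_{\mathrm{in}})^{\nu(\alpha)}$ with ratio of eigenvalues $\nu(\alpha) = -\lambda_1(\alpha)/\lambda_2(\alpha)$, so that $\nu(0)<1$ precisely when $\sigma_0=\lambda_1(0)+\lambda_2(0)>0$ and $\nu(0)>1$ when $\sigma_0<0$.

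Next I would compose with the \emph{global} transition map $\Delta_{\mathrm{glob}}\colon \Sigma^{\mathrm{out}}\to\Sigma^{\mathrm{in}}$ obtained by following the flow along a tubular neighborhood of $\Gamma_0$. Since $\Gamma_0$ is a regular orbit away from $x_0$, continuous dependence on initial conditions and on $\alpha$ makes $\Delta_{\mathrm{glob}}$ a smooth diffeomorphism onto its image, and by construction its value at the point where $W^{\alpha,u}(x_\alpha)$ meets $\Sigma^{\mathrm{out}}$ is, up to the chosen coordinate, exactly the split function: $\Delta_{\mathrm{glob}}(0) = \beta(\alpha) + O(\eta_{\mathrm{out}})$. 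Thus the full Poincaré return map $P_\alpha = \Delta_{\mathrm{glob}}\circ\Delta_{\mathrm{loc}}$ on $\Sigma^{\mathrm{in}}$ has the schematic form
\[
\xi \;\longmapsto\; \beta(\alpha) + a(\alpha)\,\xi^{\nu(\alpha)} + \text{higher order},
\]
with $a(0)\neq 0$ (its sign fixed by orientation of the global map). A periodic orbit near $\Gamma_0$ corresponds to a positive fixed point $\xi^\ast = P_\alpha(\xi^\ast)$, and uniqueness/existence is read off directly: when $\nu(0)>1$ (i.e.\ $\sigma_0<0$) the map is a contraction near $\xi=0$ composed with a small shift $\beta(\alpha)$, so there is a unique fixed point on the relevant side, and $|P_\alpha'(\xi^\ast)| = \nu(\alpha)(\xi^\ast)^{\nu(\alpha)-1}\to 0$, giving asymptotic stability; the fixed point lies in the admissible branch only on one sign of $\beta$, which by (H2) ($\beta'(0)\neq0$) corresponds to one sign of $\alpha$. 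When $\nu(0)<1$ ($\sigma_0>0$) the same computation in backward time (or for the inverse map) yields a unique unstable limit cycle for the opposite sign of $\beta$. Choosing $\overline\alpha$ small enough that all the estimates above are valid and $U_0$ the tubular neighborhood used to define $\Delta_{\mathrm{glob}}$, uniqueness within $U_0$ follows because every orbit near $\Gamma_0\cup x_0$ must repeatedly cross $\Sigma^{\mathrm{in}}$ and hence corresponds to an orbit of $P_\alpha$.

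The main obstacle is the careful derivation of the local transition map estimate: near a saddle the flight time between $\Sigma^{\mathrm{in}}$ and $\Sigma^{\mathrm{out}}$ blows up like $-\tfrac{1}{\lambda_2}\log\xi_{\mathrm{in}}$, so the map is only $C^0$ (in general not $C^1$) up to $\xi=0$, and the nonlinear terms in the linearization must be controlled by a $C^1$-linearization argument or by Shil'nikov-type coordinates to guarantee that the leading exponent is exactly $\nu(\alpha)$ with a well-defined nonzero coefficient $a(\alpha)$. Everything else — smoothness and nonvanishing of $\beta'(0)$, continuous dependence of $\Delta_{\mathrm{glob}}$, the fixed-point count — is then routine. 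Since the statement is classical (Andronov–Leontovich, as in Kuznetsov \cite[Theorem 6.1]{Y1998}), I would in fact simply cite that reference for the full proof and only indicate this return-map scheme for the reader's orientation.
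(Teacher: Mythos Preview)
Your proposal is correct and, in fact, aligned with the paper's treatment: the paper does not prove this theorem at all but simply cites it as the classical Andronov--Leontovich result (Kuznetsov \cite[Theorem 6.1]{Y1998}, with parallel references to Guckenheimer--Holmes and Wiggins), exactly as you suggest doing in your final paragraph. Your sketched return-map argument---composing a local singular transition $\xi\mapsto \xi^{\nu(\alpha)}$ with a regular global diffeomorphism shifted by the split function $\beta(\alpha)$---is precisely the standard proof found in those references, so there is nothing to correct; you have simply gone a step further than the paper by outlining why the cited result holds.
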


(H1) is a nondegeneracy condition. Due to (H2) the split function
$\alpha\mapsto\beta(\alpha)$ is injective for $\left\vert \alpha\right\vert $
small enough, hence the inverse $\alpha(\beta)$ exists for $\beta$ in a
neighborhood of $0$ and $\beta$ can be considered as a new parameter. Thus the
unique limit cycle $L_{\beta}$ exists for sufficiently small $\left\vert
\beta\right\vert $. The homoclinic orbit is called \textquotedblleft splitting
down\textquotedblright\ if $\beta<0$ and \textquotedblleft splitting
up\textquotedblright\ if $\beta>0$. We remark (cf. Kuznetsov \cite[formula
(6.25) on p. 232]{Y1998}) that $\beta^{^{\prime}}(0)\neq0$ is equivalent to
the Melnikov condition%
\begin{equation}
M_{\alpha_{0}}(0)=\int_{-\infty}^{+\infty}\ \mathrm{exp}\,\left[  -\int
_{0}^{t}\left(  \frac{\partial{f_{1}}}{\partial{x_{1}}}+\frac{\partial{f_{2}}%
}{\partial{x_{2}}}\right)  d\tau\right]  \,\left(  f_{1}\frac{\partial f_{2}%
}{\partial\alpha}-f_{2}\frac{\partial f_{1}}{\partial\alpha}\right)
\,dt\neq0, \label{M6.25}%
\end{equation}
where all expressions involving $f(0,x_{1},x_{2})=(f_{1}(0,x_{1},x_{2}%
),f_{2}(0,x_{1},x_{2}))^{\top}$ are evaluated along the homoclinic solution of
(\ref{2.1}) at $\alpha_{0}=0$. Hence (H2) is a transversality condition for
the intersection of the stable and unstable manifolds.

\subsection{The three-dimensional case}

As exposed in Kuznetsov \cite[Section 6.3]{Y1998} a three-dimensional state
space gives rise to a wider variety of homoclinic bifurcations. We will
discuss results for hyperbolic equilibria which are saddles and saddle-foci.
Taking into account also the sign of $\sigma_{0}$ there are four main cases,
cf. \cite[p. 214]{Y1998}. We will only treat the three simpler cases.

Consider an equation in $\mathbb{R}^{3}$ of the form (\ref{2.1}) having at
$\alpha_{0}=0$ an orbit $\Gamma_{0}$ homoclinic to a hyperbolic equilibrium
point $x_{0}=0$. It is also possible to define a split function in this case,
cf. \cite[p. 199]{Y1998}. Suppose that the unstable manifold $W^{u}$ of
$x_{0}$ is one-dimensional, introduce a two-dimensional cross-section $\Sigma$
and let the point $\xi^{u}$ correspond to the intersection of $W^{u}$ with
$\Sigma$. Then a split function $\beta=\xi^{u}$ can be defined as before in
the planar case. Its zero $\beta=0$ gives a condition for a homoclinic
bifurcation in $\mathbb{R}^{3}$.

The case of a saddle is described in \cite[Theorem 6.3 and Theorem 6.5]{Y1998}
as follows.

\begin{theorem}
\label{Theorem2.2}Consider system (\ref{2.1}) in $\mathbb{R}^{3}$ having at
$\alpha_{0}=0$ an orbit $\Gamma_{0}$ homoclinic to a saddle $x_{0}=0$ with
real eigenvalues $\lambda_{1}(0)>0>\lambda_{2}(0)>\lambda_{3}(0)$. Assume that
the following conditions hold:

(H1) $\Gamma_{0}$ returns to $x_{0}$ along the eigenspace for $\lambda_{2}(0)$;

(H2) $\beta^{\prime}(0)\not =0$, where $\beta(\alpha)$ is a split function.

(i) Suppose that $\sigma_{0}=\lambda_{1}(0)+\lambda_{2}(0)<0$. Then, there
exists a neighborhood $U_{0}$ of $\Gamma_{0}\cup x_{0}$ in which the system
has a unique and asymptotically stable limit cycle $L_{\beta}$ for all
sufficiently small $\beta>0$. There are no periodic orbits if $\beta\leq0$.

(ii) Suppose that $\sigma_{0}=\lambda_{1}(0)+\lambda_{2}(0)>0$ and,
additionally, $\Gamma_{0}$ is simple or twisted. Then there exists a
neighborhood $U_{0}$ of $\Gamma_{0}\cup x_{0}$ in which for all sufficiently
small $\left\vert \beta\right\vert $ a unique saddle limit cycle $L_{\beta}$
bifurcates from $\Gamma_{0}$. The cycle exists for $\beta<0$ if $\Gamma_{0}$
is simple, and for $\beta>0$ if $\Gamma_{0}$ is twisted. In the first case
there are no periodic orbits if $\beta\geq0$ and in the second case there are
no periodic orbits if $\beta\leq0$.
\end{theorem}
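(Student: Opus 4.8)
\emph{Proof sketch.} Since Theorem \ref{Theorem2.2} is precisely \cite[Theorems 6.3 and 6.5]{Y1998}, one may simply invoke those results; I indicate the mechanism. The plan is to reduce the existence, stability and (non)existence of periodic orbits in a small neighbourhood $U_{0}$ of $\Gamma_{0}\cup x_{0}$ to a one-dimensional fixed-point equation for a Poincar\'e return map, following the construction of Shil'nikov as presented in \cite[Section 6.3]{Y1998}. First I would choose $C^{1}$-linearizing coordinates $(u,v,w)$ near $x_{0}=0$ (the needed non-resonance is generic; alternatively one combines Grobman--Hartman with direct estimates), in which $\dot u=\lambda_{1}u$, $\dot v=\lambda_{2}v$, $\dot w=\lambda_{3}w$ with $\lambda_{1}(0)>0>\lambda_{2}(0)>\lambda_{3}(0)$, so that $W^{u}$ is the $u$-axis, the leading stable direction is the $v$-axis, and the strong stable direction is the $w$-axis; by (H1) the incoming branch of $\Gamma_{0}$ enters along the $v$-axis. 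Then I fix an incoming cross-section $\Sigma^{\mathrm{in}}=\{v=\varepsilon\}$ with coordinates $(u_{0},w_{0})$ and an outgoing cross-section $\Sigma^{\mathrm{out}}=\{u=\varepsilon'\}$ with coordinates $(v_{1},w_{1})$; after shrinking $U_{0}$, every orbit that stays in $U_{0}$ and is periodic must cross $\Sigma^{\mathrm{in}}$, so periodic orbits correspond to fixed points of the return map $P_{\beta}=\Delta^{\mathrm{glob}}_{\beta}\circ\Delta^{\mathrm{loc}}$.

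For the local passage, integrating the linear flow from $\Sigma^{\mathrm{in}}$ to $\Sigma^{\mathrm{out}}$ and writing $\nu=-\lambda_{2}(0)/\lambda_{1}(0)>0$ gives
\[
\Delta^{\mathrm{loc}}(u_{0},w_{0})=\bigl(C\,u_{0}^{\nu},\ C'\,w_{0}\,u_{0}^{|\lambda_{3}|/\lambda_{1}}\bigr)\,(1+o(1))\qquad\text{as }u_{0}\to0,
\]
with constants $C,C'\neq0$; note $|\lambda_{3}|/\lambda_{1}>\nu$, so the second component and its dependence on $u_{0}$ are of higher order. Since $\sigma_{0}=\lambda_{1}(0)+\lambda_{2}(0)$, one has $\sigma_{0}<0\iff\nu>1$ and $\sigma_{0}>0\iff\nu<1$, i.e.\ $\Delta^{\mathrm{loc}}$ is contracting in the $u$-direction in case (i) and expanding in case (ii). The global passage $\Delta^{\mathrm{glob}}_{\beta}:\Sigma^{\mathrm{out}}\to\Sigma^{\mathrm{in}}$ along $\Gamma_{0}$ is a diffeomorphism; to leading order it is affine, the split function enters exactly as a translation in the $u$-direction (transverse to $W^{s}\cap\Sigma^{\mathrm{in}}$), and its linear part is nonsingular, so
\[
\Delta^{\mathrm{glob}}_{\beta}(v_{1},w_{1})=\bigl(\beta+a\,v_{1}+b\,w_{1},\ c\,v_{1}+e\,w_{1}\bigr)+(\text{h.o.t.}),\qquad ae-bc\neq0 .
\]
By (H2) the map $\alpha\mapsto\beta(\alpha)$ is a local diffeomorphism, so $\beta$ may be used as the bifurcation parameter on a full neighbourhood of $0$. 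The number $A:=aC$ is nonzero (the excluded ``neutral'' case $a=0$ is ruled out by the hypotheses), and its sign is positive when $\Gamma_{0}$ is simple and negative when it is twisted, since the simple/twisted dichotomy is precisely whether the return along $\Gamma_{0}$ preserves or reverses the orientation of the leading stable direction.

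Composing, the coordinate $w_{0}$ is slaved to $u_{0}$ (it solves $w_{0}=cC\,u_{0}^{\nu}(1+o(1))$) and the fixed-point problem collapses to the scalar equation
\[
u_{0}=\beta+A\,u_{0}^{\nu}\,(1+o(1))
\]
on a small interval around $0$. For $\sigma_{0}<0$ ($\nu>1$) the right-hand side is a strong contraction near $0$: for $\beta>0$ there is a unique solution $u_{0}\approx\beta$, at which the $u$-multiplier is $O(\beta^{\nu-1})\to0$ and the $w$-multiplier is $O(\beta^{|\lambda_{3}|/\lambda_{1}})\to0$, so $L_{\beta}$ is a unique asymptotically stable limit cycle; for $\beta\le0$ the only solution of $u_{0}=\beta+A\,u_{0}^{\nu}$ in the small interval is the trivial one, so there is no periodic orbit in $U_{0}$. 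This yields (i). For $\sigma_{0}>0$ ($\nu<1$) the map is expanding; writing $h(u_{0})=u_{0}-A\,u_{0}^{\nu}$ one checks that $h$ is strictly monotone near $0$ with $h(0)=0$, decreasing if $A>0$ and increasing if $A<0$, so $h(u_{0})=\beta$ has a unique small solution exactly for $\beta<0$ when $\Gamma_{0}$ is simple and exactly for $\beta>0$ when $\Gamma_{0}$ is twisted, and none otherwise. At the bifurcating fixed point the $u$-multiplier is $|A|\nu u_{0}^{\nu-1}>1$ while the $w$-multiplier tends to $0$, so $L_{\beta}$ is a saddle cycle, and there are no periodic orbits in $U_{0}$ in the complementary $\beta$-range. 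This yields (ii). Finally $U_{0}$ is shrunk once more so that the $o(1)$ estimates are uniform and every orbit remaining in $U_{0}$ meets $\Sigma^{\mathrm{in}}$.

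The main obstacle is the rigorous construction of $\Delta^{\mathrm{loc}}$ with uniform $C^{1}$ control: establishing the exponent $\nu$ and the remainder bounds in the displayed asymptotics under only the assumed smoothness (this is the content of Shil'nikov's lemma), and, relatedly, pinning down the sign of $A$ from the simple/twisted classification of $\Gamma_{0}$. Once the normal form $u_{0}=\beta+A\,u_{0}^{\nu}(1+o(1))$ is available with controlled derivatives, the counting of fixed points, the multiplier computations, and the exclusion of further periodic orbits are elementary.
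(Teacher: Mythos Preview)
The paper does not prove Theorem~\ref{Theorem2.2}; it is stated there as a citation of \cite[Theorems 6.3 and 6.5]{Y1998}, with no argument given beyond the reference. You correctly identify this and, going further than the paper, sketch the Shil'nikov--Kuznetsov mechanism (local passage with exponent $\nu=-\lambda_{2}/\lambda_{1}$, global affine map shifted by $\beta$, reduction to the scalar fixed-point equation $u_{0}=\beta+A\,u_{0}^{\nu}(1+o(1))$, and the case split $\nu\gtrless 1$ with the sign of $A$ encoding simple versus twisted). That sketch is faithful to the source and the fixed-point counting and multiplier analysis you give are correct; the only genuinely delicate steps are precisely the two you flag, namely the $C^{1}$ Shil'nikov-type estimates for $\Delta^{\mathrm{loc}}$ and the identification of $\mathrm{sgn}\,A$ with the orientation character of the global return, both of which are handled in \cite{Y1998}.
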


The assumption in (ii) needs some explanation. Here we suppose that the
two-dimen\-sional stable manifold $W^{\alpha_{0},s}(x_{0})$ intersects itself
near the saddle along the two exceptional orbits on $W^{\alpha_{0},s}(x_{0})$
that approach the saddle along the eigenspace for $\lambda_{3}(0)$ (this is
called the \emph{strong inclination property}). This yields a two-dimensional
nonsmooth submanifold which is topologically equivalent to either a simple
band or a twisted band called a M\"{o}bius band (cf. also Wiggins
\cite[Section 4.8A]{S2003}). In the first case, $\Gamma_{0}$ is called simple,
in the second case twisted.

The case of a saddle-focus with $\sigma_{0}<0$ is described in \cite[Theorem
6.4]{Y1998} as follows.

\begin{theorem}
\label{Theorem2.3}Consider system (\ref{2.1}) in $\mathbb{R}^{3}$ having at
$\alpha_{0}=0$ an orbit $\Gamma_{0}$ homoclinic to a saddle-focus $x_{0}=0$
with eigenvalues satisfying $\lambda_{1}(0)>0>\operatorname{Re}\lambda
_{2,3}(0)$ and $\lambda_{2}(0)\not =\lambda_{3}(0)$. Assume that the following
conditions hold:

(H1) $\beta^{\prime}(0)\not =0$, where $\beta(\alpha)$ is a split function;

(H2) $\sigma_{0}=\lambda_{1}(0)+\operatorname{Re}\lambda_{2,3}(0)<0$.

Then exists a neighborhood $U_{0}$ of $\Gamma_{0}\cup x_{0}$ in which the
system has a unique and asymptotically stable limit cycle $L_{\beta}$ for all
sufficiently small $\beta>0$. There are no periodic orbits if $\beta\leq0$.
\end{theorem}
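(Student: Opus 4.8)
The plan is to reduce the statement to an analysis of the Poincar\'e return map near $\Gamma_{0}\cup x_{0}$, following the classical construction going back to Shil'nikov (cf. Kuznetsov \cite[Section 6.3]{Y1998}, Wiggins \cite[Section 4.8A]{S2003}). First, by (H1) the split function $\alpha\mapsto\beta(\alpha)$ is a local diffeomorphism at $\alpha_{0}=0$, so I would take $\beta$ itself as the parameter. For $\beta$ near $0$ the equilibrium persists as a saddle-focus with eigenvalues $\lambda_{1}(\beta)>0$, $-\rho(\beta)\pm i\omega(\beta)$, and (H2) becomes $\nu:=\rho(0)/\lambda_{1}(0)>1$, which persists for small $\beta$. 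Next I would pass to suitable smooth coordinates $(r,\varphi,z)$ near $x_{0}$ in which $W^{s}_{\mathrm{loc}}=\{z=0\}$, $W^{u}_{\mathrm{loc}}$ is the $z$-axis, and the flow is to leading order $\dot r=-\rho r$, $\dot\varphi=\omega$, $\dot z=\lambda_{1}z$ (the condition $\lambda_{2}\neq\lambda_{3}$, i.e.\ $\omega\neq0$, guarantees $x_{0}$ is genuinely a focus).

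On a small solid cylinder $\{r\le r_{0},\ |z|\le z_{0}\}$ I would fix a cross-section $\Sigma^{\mathrm{in}}$ on the lateral surface $\{r=r_{0}\}$, coordinatized by $(\varphi,z)$ with $z$ near $0$ (transverse to $W^{s}$ and meeting $\Gamma_{0}$ where it re-enters), and a cross-section $\Sigma^{\mathrm{out}}$ on the cap $\{z=z_{0}\}$, coordinatized by $(r,\varphi)$ with $r$ near $0$ (transverse to $W^{u}$ and meeting $\Gamma_{0}$ where it leaves). Integrating the essentially linear flow through the cylinder gives the local transition map $\Delta_{0}^{\beta}\colon\Sigma^{\mathrm{in}}\to\Sigma^{\mathrm{out}}$, $(\varphi,z)\mapsto\bigl(r_{0}(z/z_{0})^{\nu}(1+o(1)),\ \varphi-\tfrac{\omega}{\lambda_{1}}\ln(z/z_{0})+o(1)\bigr)$ as $z\to0^{+}$: strong radial contraction together with infinite winding, so $\det D\Delta_{0}^{\beta}=O(z^{\nu-1})\to0$. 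Since $\Gamma_{0}$ crosses $\Sigma^{\mathrm{out}}$ and $\Sigma^{\mathrm{in}}$ transversally, following the flow along $\Gamma_{0}$ produces a $C^{1}$-diffeomorphism $\Delta_{1}^{\beta}\colon\Sigma^{\mathrm{out}}\to\Sigma^{\mathrm{in}}$ depending smoothly on $\beta$, and the normalization that its $z$-component equals $\beta+O(r)$ is precisely the statement that $\beta$ is the split function. The return map is $P_{\beta}=\Delta_{1}^{\beta}\circ\Delta_{0}^{\beta}$, periodic orbits in a small neighborhood $U_{0}$ of $\Gamma_{0}\cup x_{0}$ correspond to its fixed points, and its radial component has the form $z_{1}=\beta+a\,r_{0}(z/z_{0})^{\nu}\cos\!\bigl(\varphi-\tfrac{\omega}{\lambda_{1}}\ln(z/z_{0})+b\bigr)+o(z^{\nu})$.

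For $0<\beta$ small I would show that the thin annulus $\{\tfrac12\beta\le z\le2\beta\}$ in $\Sigma^{\mathrm{in}}$ is mapped into itself by $P_{\beta}$, since the perturbation term is $O(\beta^{\nu})=o(\beta)$, and that after a radial rescaling $DP_{\beta}$ is a uniform contraction there; the contraction mapping principle then yields a unique fixed point with $z^{*}(\beta)\to0^{+}$ as $\beta\to0^{+}$, hence a unique periodic orbit in $U_{0}$, which is asymptotically stable because $DP_{\beta}$ contracts at the fixed point. For $\beta\le0$ I would use the estimate $z_{1}\le\beta+a\,r_{0}z_{0}^{-\nu}z^{\nu}+o(z^{\nu})<z$, valid for small $z>0$ because $\nu>1$, to conclude that $P_{\beta}$ has no fixed point and therefore there is no periodic orbit in $U_{0}$; the second branch of $W^{u}$ leaves $U_{0}$, so only the half-cylinder $z>0$ is relevant.

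I expect the two main obstacles to be: establishing the coordinates near the saddle-focus together with the sharp asymptotics of $\Delta_{0}^{\beta}$ uniformly in $\beta$ (a careful but standard normal-form/estimates argument), and verifying that $DP_{\beta}$ is a genuine contraction in an adapted norm for $\beta>0$ --- this is where the inequality $\nu>1$, equivalently $\sigma_{0}<0$, is used decisively, since without it one enters the Shil'nikov chaos regime with infinitely many periodic orbits rather than a single attracting cycle. As the theorem is quoted from Kuznetsov \cite[Theorem 6.4]{Y1998}, I would in the end just refer to the proof given there.
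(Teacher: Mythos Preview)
Your sketch is a correct outline of the classical Shil'nikov argument, but note that the paper does not prove this theorem at all: it is stated purely as a citation of Kuznetsov \cite[Theorem 6.4]{Y1998}, with no accompanying proof in the paper itself. Your final sentence---that one would in the end simply refer to the proof there---is therefore exactly what the paper does, and everything preceding it in your proposal goes well beyond what the paper provides. So there is no discrepancy to report; if anything, you have supplied more than the paper asks for.
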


The remaining case of a saddle-focus with $\sigma_{0}>0$ is much more
complicated and leads, among others, to infinitely many saddle limit cycles,
cf. \cite[Theorem 6.6]{Y1998}.

\section{Control sets and their parameter dependence\label{Section3}}

We consider control-affine systems in $\mathbb{R}^{d}$ of the form%
\begin{align}
\dot{x}(t)  &  =f_{0}(x(t))+\sum_{i=1}^{m}u_{i}(t)f_{i}(x(t)),\label{3.1}\\
u  &  \in\mathcal{U}:=\left\{  u\in L^{\infty}(\mathbb{R},\mathbb{R}%
^{m})\left\vert u(t)\in U\,\text{for\ almost all}\,t\in\mathbb{R}\right.
\right\}  ,\nonumber
\end{align}
where $f_{0},f_{1},\ldots,f_{m}$ are smooth vector fields on $\mathbb{R}^{d}$
and the control range $U\subset\mathbb{R}^{m}$ is compact and convex with
$0\in\mathrm{int}U$. We assume that for every initial state $x\in
\mathbb{R}^{d}$ and every control function $u\in\mathcal{U}$ there exists a
unique solution $\varphi(t,x,u),t\in\mathbb{R}$, with $\varphi(0,x,u)=x$ of
(\ref{3.1}) depending continuously on $x$. The system with $u\equiv0$ given by%
\begin{equation}
\dot{x}(t)=f_{0}(x(t)) \label{uncontrolled}%
\end{equation}
is called the uncontrolled system. It generates a continuous flow
$\psi(t,\cdot),t\in\mathbb{R}$, on $\mathbb{R}^{d}$.

The set of points reachable from $x\in\mathbb{R}^{d}$ and controllable to
$x\in\mathbb{R}^{d}$ up to time $T>0$ are defined by
\begin{align*}
{\mathcal{O}}_{\leq T}^{+}(x)  &  :=\{y\in\mathbb{R}^{d}\left\vert {}\right.
\;\text{there are}\;0\leq t\leq T\;\text{and}\;u\in\mathcal{U}\;\text{with}%
\;y=\varphi(t,x,u)\},\\
{\mathcal{O}}_{\leq T}^{-}(x)  &  :=\{y\in\mathbb{R}^{d}\left\vert {}\right.
\;\text{there are}\;0\leq t\leq T\;\text{and}\;u\in\mathcal{U}\;\text{with}%
\;x=\varphi(t,y,u)\},
\end{align*}
resp. Furthermore, the reachable set (or positive orbit) from $x$ and the set
controllable to $x$ (or negative orbit of $x$) are%
\[
\mathcal{O}^{+}(x)=\bigcup\nolimits_{T>0}O_{\leq T}^{+}(x),\quad
\mathcal{O}^{-}(x)=\bigcup\nolimits_{T>0}O_{\leq T}^{-}(x),
\]
resp. The system is called locally accessible in $x$, if $\mathcal{O}_{\leq
T}^{+}(x)$ and $\mathcal{O}_{\leq T}^{-}(x)$ have nonvoid interior for all
$T>0$. This is guaranteed by the accessibility rank condition%
\begin{equation}
\dim\mathcal{LA}\left\{  f_{0},f_{1},\ldots,f_{m}\right\}  (x)=d\text{ for all
}x\in\mathbb{R}^{d}, \label{ARC}%
\end{equation}
where the left hand side denotes the dimension of the subspace of
$\mathbb{R}^{d}$ corresponding to the vector fields evaluated in $x$ in the
Lie algebra $\mathcal{LA}\left\{  f_{0},f_{1},\ldots,f_{m}\right\}  $
generated by the vector fields $f_{0},f_{1},\ldots,f_{m}$ (cf. Sontag
\cite[Theorem 9, p. 156]{Son98}).

The following definition introduces subsets of complete approximate
controllability which are of primary interest in the present paper.

\begin{definition}
\label{def:Dset}A set $D\subset\mathbb{R}^{d}$ is called a control set of
system (\ref{3.1}) if it has the following properties: (i) for all $x\in D$
there is a control function $u\in\mathcal{U}$ such that $\varphi(t,x,u)\in D$
for all $t\geq0$, (ii) for all $x\in D$ one has $D\subset\mathrm{cl}%
\mathcal{O}^{+}(x)$, and (iii) $D$ is maximal with these properties, that is,
if $D^{\prime}\supset D$ satisfies conditions (i) and (ii), then $D^{^{\prime
}}=D$.

A control set $D\subset\mathbb{R}^{d}$ is called an invariant control set if
$\mathrm{cl}D=\mathrm{cl}\mathcal{O}^{+}(x)$ for all $x\in D$. All other
control sets are called variant.
\end{definition}

If the intersection of two control sets is nonvoid, the maximality property
(iii) implies that they coincide. If the system is locally accessible in all
$y\in\mathrm{int}D$, then $\mathrm{int}D\subset\mathcal{O}^{+}(x)$ for all
$x\in D$ and $D=\mathrm{cl}\mathcal{O}^{+}(x)\cap\mathcal{O}^{-}(y)$ for all
$x,y\in\mathrm{int}D$. For these properties and further discussion of control
sets, we refer to Colonius and Kliemann \cite[Chapters 3 and 4]{ColK00}.

Next we will discuss the dependence of control sets on parameters. The
parameters change the size of the control range or the involved vector fields.
First we analyze families of control systems of the form%
\begin{equation}
\dot{x}(t)=f_{0}(x(t))+\sum_{i=1}^{m}u_{i}(t)f_{i}(x(t)),\quad u(t)\in
U^{\rho}:=\rho U, \label{control_aff}%
\end{equation}
where $\rho>0$ and $u\in\mathcal{U}^{\rho}:=\{u\in L^{\infty}(\mathbb{R}%
,\mathbb{R}^{m})\left\vert u(t)\in U^{\rho}\,\text{for\ almost all}%
\,t\in\mathbb{R}\right.  \}$. We suppose that the assumptions on (\ref{3.1})
are satisfied. Obviously, the accessibility rank condition (\ref{ARC}) is
independent of $\rho>0$.

A subset $K\subset\mathbb{R}^{d}$ is called invariant for the uncontrolled
system (\ref{uncontrolled}) if $\psi(t,x)\in K$ for all $x\in K$ and
$t\in\mathbb{R}$. An invariant set $K\subset\mathbb{R}^{d}$ is called chain
transitive if for all $x,y\in K$ and every $\varepsilon>0$ and $T>0$, there
exist $n\in\mathbb{N}$, points $x=x_{0},x_{1},\dots,x_{n}=y\in K$ and times
$t_{0},\dots,t_{n-1}>T$ such that $d(\psi(t_{i},x_{i}),x_{i+1})<\varepsilon$
for $i=0,\dots,n-1$. It is easy to show that an equilibrium, a limit cycle,
and an orbit homoclinic to an equilibrium $x_{0}$ together with $x_{0}$ are
compact chain transitive sets, but they need not be maximal (with respect to inclusion).

The following result describes the behavior of control sets for small control ranges.

\begin{theorem}
\label{Theorem_inner}Consider a family of control-affine systems of the form
(\ref{control_aff}). Let $K\subset\mathbb{R}^{d}$ be a compact maximal chain
transitive set for the flow of the uncontrolled system (\ref{uncontrolled}),
assume that the accessibility rank condition (\ref{ARC}) holds, and the
following inner pair condition holds for all $(x,0)\in K\times\mathcal{U}$:
there is $T>0$ with $\psi(T,x)=\varphi(T,x,0)\in\mathrm{int}\mathcal{O}%
^{+}(x)$. Then there is an increasing family of control sets $D^{\rho}$ of
(\ref{control_aff}) with parameter $\rho>0$ such that
\[
K\subset\mathrm{int}D^{\rho}\text{ and }K=\bigcap\nolimits_{\rho>0}D^{\rho}.
\]
If $K$ is an asymptotically stable equilibrium or periodic orbit, then the
control sets are invariant for $\rho>0$, small enough.
\end{theorem}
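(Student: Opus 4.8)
The plan is to construct the family $D^{\rho}$ as the control sets containing $K$ and to verify the three assertions (nesting, intersection, invariance) in turn, relying on the perturbation theory for control sets recalled in Section \ref{Section3}. First I would fix $\rho>0$ and observe that, since the accessibility rank condition (\ref{ARC}) holds and $0\in\mathrm{int}U^{\rho}$, every trajectory of the uncontrolled system is a trajectory of (\ref{control_aff}), so $K$ itself satisfies property (i) in Definition \ref{def:Dset}. To obtain property (ii) I would use that $K$ is \emph{chain transitive}: chains of the uncontrolled flow can be realized as genuine controlled trajectories because near each point the reachable set has nonvoid interior (local accessibility) and the control range contains a neighborhood of $0$, so small jumps of size $<\varepsilon$ between successive pieces of uncontrolled trajectory are achievable by short controlled arcs. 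This is the standard argument that a chain transitive set of the uncontrolled flow is approximately controllable for the control system, hence is contained in a unique control set $D^{\rho}$; maximality of $K$ among chain transitive sets is what guarantees that distinct points of $K$ are not separated into different control sets, i.e. that $D^{\rho}$ is a single control set rather than a union. The inner pair condition $\psi(T,x)=\varphi(T,x,0)\in\mathrm{int}\,\mathcal{O}^{+}(x)$ then upgrades $K\subset D^{\rho}$ to $K\subset\mathrm{int}\,D^{\rho}$, since every point of $K$ is then in the interior of a reachable set from a nearby point of $K$.

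For the monotonicity $D^{\rho_1}\subset D^{\rho_2}$ when $\rho_1<\rho_2$ I would note that enlarging the control range only enlarges reachable sets, so a set satisfying (i) and (ii) for $\rho_1$ still satisfies them for $\rho_2$; since $D^{\rho_1}$ and $D^{\rho_2}$ both meet $K$, the intersection property of control sets forces $D^{\rho_1}\subset D^{\rho_2}$. For the identity $K=\bigcap_{\rho>0}D^{\rho}$ the inclusion $\supset$ is clear from $K\subset D^{\rho}$ for all $\rho$; the nontrivial direction is $\bigcap_{\rho>0}D^{\rho}\subset K$. Here I would argue by contradiction: if some $y\notin K$ lay in $D^{\rho}$ for all $\rho$, then taking $\rho\to 0$ the controlled trajectories from $y$ staying in $D^{\rho}$ would have to converge (in a Hausdorff or chain-recurrence sense) to an invariant chain transitive set of the uncontrolled flow containing both $y$ and (by approximate controllability inside $D^{\rho}$) all of $K$; maximality of $K$ then yields that this limit set is $K$, so $y\in K$, a contradiction. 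Making the limit argument precise is the technical heart — one needs a semicontinuity statement for control sets under $\rho\downarrow 0$, namely that $\limsup_{\rho\to 0} D^{\rho}$ is an invariant chain transitive set of (\ref{uncontrolled}); this is exactly the kind of result established in Colonius and Kliemann \cite[Chapter 4]{ColK00} and is what I would cite rather than reprove.

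Finally, for the invariance claim, suppose $K$ is an asymptotically stable equilibrium or periodic orbit of the uncontrolled flow. Then $K$ has an open neighborhood $V$ that is forward invariant for (\ref{uncontrolled}), and by continuous dependence on the control (again using compactness of $U^{\rho}$ and that $U^{\rho}$ shrinks to $\{0\}$ as $\rho\to 0$) there is $\rho_0>0$ such that for all $\rho<\rho_0$ the neighborhood $V$ is still forward invariant for \emph{all} admissible controls in $\mathcal{U}^{\rho}$. Hence for $\rho<\rho_0$ we have $\mathrm{cl}\,\mathcal{O}^{+}(x)\subset\mathrm{cl}\,V$ for every $x\in D^{\rho}\subset V$, so no trajectory can leave a neighborhood of $K$; combined with $D^{\rho}\subset\mathrm{cl}\,\mathcal{O}^{+}(x)$ (which holds in the interior of any control set under local accessibility) this gives $\mathrm{cl}\,D^{\rho}=\mathrm{cl}\,\mathcal{O}^{+}(x)$ for all $x\in D^{\rho}$, i.e. $D^{\rho}$ is invariant.

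The main obstacle I anticipate is the second step — the precise justification of $\bigcap_{\rho>0}D^{\rho}\subset K$ — because it requires controlling the behaviour of the family $D^{\rho}$ as $\rho\to 0$ and invoking the correspondence between the lower limit of control sets and chain transitive sets of the uncontrolled system; the chain-transitivity and maximality hypotheses on $K$ are precisely tailored to close this gap, so the proof should amount to quoting the relevant continuity results from \cite{ColK00} and checking that their hypotheses are met here.
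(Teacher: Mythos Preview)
Your proposal is correct and aligns with the paper's approach: the paper's proof consists entirely of citing Colonius and Kliemann \cite[Corollary 4.7.2]{ColK00} for the first assertion and \cite[Corollary 4.1.13]{ColK00} for the invariance claim, and your sketch is a faithful outline of the arguments behind those results (you yourself conclude that the proof amounts to quoting the relevant results from \cite{ColK00}). One minor correction: chain transitivity of $K$ alone (not maximality) is what ensures all points of $K$ lie in a single control set $D^{\rho}$; maximality is used only where you later invoke it, namely to force $\bigcap_{\rho>0}D^{\rho}\subset K$.
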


\begin{proof}
The first assertion is proved in Colonius and Kliemann \cite[Corollary
4.7.2]{ColK00}. The invariance of the control sets follows from
\cite[Corollary 4.1.13]{ColK00}.
\end{proof}

By \cite[Proposition 4.5.19]{ColK00}, the inner pair condition in $(x,0)$ is
satisfied, if for some $T>0$ the following condition holds in $y=\varphi
(T,x,0)$:%
\begin{equation}
\mathrm{span}\{f_{0}(y),\mathrm{ad}_{f_{0}}^{k}f_{i}(y)\left\vert
i=1,\ldots,m,~k=0,1,\ldots\right.  \}=\mathbb{R}^{d}. \label{NamA}%
\end{equation}
Here the $\mathrm{ad}$-operator is given by iterated Lie brackets,
$\mathrm{ad}_{f_{0}}^{0}f_{i}=f_{i}$ and $\mathrm{ad}_{f_{0}}^{k+1}%
f_{i}=[f_{0},\mathrm{ad}_{f_{0}}^{k}f_{i}]$ for $k\geq0$.

Further results on the dependence of control sets, in particular, their
boundaries, on the parameter $\rho$ are given in Gayer \cite{Gayer04}.

Next we analyze the behavior of control sets under changes of an external
parameter $\alpha$. Consider the following family of control systems on
$\mathbb{R}^{d}$ with $\alpha\in A\subset\mathbb{R}^{k}$,
\begin{equation}
\dot{x}(t)=f_{0}(\alpha,x(t))+\sum_{i=1}^{m}u_{i}(t)f_{i}(\alpha,x(t)),\quad
u\in\mathcal{U}, \label{parameter}%
\end{equation}
with smooth maps $f_{i}:\mathbb{R}^{k}\times\mathbb{R}^{d}\rightarrow
\mathbb{R}^{d},\,i\in\{0,1,\dots,m\}$. We assume that for every $\alpha\in A$
the system satisfies the assumptions on (\ref{3.1}). For $x\in\mathbb{R}^{d}$
and $u\in\mathcal{U}$ the solutions are denoted by $\varphi^{\alpha
}(t,x,u),t\in\mathbb{R}$.

The following theorem describes how the control sets change under parameter
variation. Recall that a set-valued map $x\mapsto F(x)$ between metric spaces
is lower semicontinuous at a point $x_{0}$ if for every open set $O$ with
$F(x_{0})\cap O\not =\varnothing$ it follows that $F(x)\cap O\not =%
\varnothing$ for all $x$ in a neighborhood of $x_{0}$; cf. Aubin and
Frankowska \cite[Definition 1.4.2]{AubF90}.

\begin{theorem}
\label{Theorem_parameter1}For the family of systems (\ref{parameter}) fix a
parameter value $\alpha_{0}\in\mathrm{int}A$. Assume that with $\alpha_{0}$
the accessibility rank condition (\ref{ARC}) is fulfilled and consider a
control set $D^{\alpha_{0}}$ .

(i) Let $K\subset\mathrm{int}D^{\alpha_{0}}$ be a compact set. Then there is
$\delta_{K}>0$ such that for all $\alpha$ with $\left\Vert \alpha-\alpha
_{0}\right\Vert <\delta_{K}$ there is a unique control set $D_{K}^{\alpha}$
with $K\subset\mathrm{int}D_{K}^{\alpha}$ for system (\ref{parameter}) with
parameter value $\alpha$.

(ii) There are $\delta_{0}>0$ and a unique family of control sets $D^{\alpha}$
for all $\alpha$ with $\left\Vert \alpha-\alpha_{0}\right\Vert <\delta_{0}$
with the following property: For every compact set $K\subset\mathrm{int}%
D^{\alpha_{0}}$ there is a $\delta_{K}\in(0,\delta_{0})$ so that
$K\subset\mathrm{int}D^{\alpha}$ for every $\alpha$ with $\left\Vert
\alpha-\alpha_{0}\right\Vert <\delta_{K}$. The set-valued maps $\alpha\mapsto
D^{\alpha}$ and $\alpha\mapsto\mathrm{cl}D^{\alpha}$ are lower semicontinuous
at $\alpha=\alpha_{0}$.
\end{theorem}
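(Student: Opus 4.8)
The plan is to prove Theorem \ref{Theorem_parameter1} by reducing it to the already-established results on control sets (Theorem \ref{Theorem_inner} and the cited material from \cite{ColK00}) together with a continuity argument in $\alpha$. First I would fix a point $x_0 \in \mathrm{int}D^{\alpha_0}$ and recall that, since the accessibility rank condition holds at $\alpha_0$ and the vector fields depend smoothly on $\alpha$, the rank condition persists for $\alpha$ near $\alpha_0$; hence local accessibility holds uniformly in a neighborhood, and for such $\alpha$ the interior of a control set is contained in the intersection $\mathcal{O}^{+,\alpha}(x)\cap\mathcal{O}^{-,\alpha}(y)$ for interior points. The essential input is continuous dependence of trajectories: for fixed $x$, $u$ and $T$, the map $\alpha\mapsto\varphi^\alpha(T,x,u)$ is continuous (indeed smooth), uniformly on compact sets of controls, times and initial conditions.

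For part (i), given a compact $K\subset\mathrm{int}D^{\alpha_0}$, I would use that $K$ is covered by finitely many open "controllability neighborhoods" coming from the $\alpha_0$-system: for each pair of points $x,y\in K$ there is a control $u_{xy}$ and time $T_{xy}$ steering $x$ into $\mathrm{int}\mathcal{O}^{+,\alpha_0}$-charts around $y$, and by compactness one extracts a finite subcover with a common finite set of controls and times. By the continuity of $\alpha\mapsto\varphi^\alpha$, these approximate controllability relations survive for $\|\alpha-\alpha_0\|<\delta_K$: the images of the relevant open sets under the finitely many flow maps still overlap, so there is a single control set $D_K^\alpha$ for the $\alpha$-system whose interior contains $K$. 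Uniqueness follows from the intersection property of control sets (two control sets meeting in a point coincide), since any two control sets containing $K$ in their interiors intersect.

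For part (ii), I would exhaust $\mathrm{int}D^{\alpha_0}$ by an increasing sequence of compact sets $K_1\subset K_2\subset\cdots$ with $\bigcup_n K_n=\mathrm{int}D^{\alpha_0}$ and apply part (i) to each $K_n$, obtaining $\delta_{K_n}>0$ and control sets $D_{K_n}^\alpha$. The point is that for $\alpha$ with $\|\alpha-\alpha_0\|<\delta_0:=\delta_{K_1}$ all these control sets (for the various $K_n$ that are "active" at this $\alpha$) coincide because their interiors share the common point from $K_1$; this defines the unique family $D^\alpha$. Fix $\delta_0$ and set $D^\alpha:=D_{K_1}^\alpha$ for $\|\alpha-\alpha_0\|<\delta_0$; given any compact $K\subset\mathrm{int}D^{\alpha_0}$ it lies in some $K_n$, and then for $\|\alpha-\alpha_0\|<\delta_K:=\min(\delta_0,\delta_{K_n})$ one has $K\subset K_n\subset\mathrm{int}D_{K_n}^\alpha=\mathrm{int}D^\alpha$. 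Lower semicontinuity of $\alpha\mapsto D^\alpha$ at $\alpha_0$ is then immediate: given an open $O$ with $O\cap D^{\alpha_0}\neq\varnothing$, pick a point of $O\cap\mathrm{int}D^{\alpha_0}$, enclose it in a small compact $K\subset O\cap\mathrm{int}D^{\alpha_0}$, and conclude $K\subset\mathrm{int}D^\alpha\subset D^\alpha$, hence $O\cap D^\alpha\neq\varnothing$, for $\|\alpha-\alpha_0\|<\delta_K$; the same argument applied to $\mathrm{cl}D^{\alpha_0}\supset\mathrm{int}D^{\alpha_0}$ (whose closure it is, by accessibility) gives lower semicontinuity of $\alpha\mapsto\mathrm{cl}D^\alpha$.

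The main obstacle I anticipate is the compactness/finiteness step in part (i): making precise that the approximate controllability within $D^{\alpha_0}$ can be witnessed by \emph{finitely many} controls and times acting on \emph{finitely many} open sets, so that a single uniform $\delta_K$ works, and then verifying that perturbing $\alpha$ keeps all the required set overlaps open and nonempty. This is exactly the kind of argument underlying \cite[Chapter 4]{ColK00}, so I would lean on those techniques; in fact the cleanest route is to invoke the results there on parameter-dependent control sets directly if the cited corollaries already phrase things in this generality, and otherwise to mirror the proof of Theorem \ref{Theorem_inner} with $\rho$ replaced by $\alpha$, using smooth dependence on $\alpha$ in place of monotonicity in $\rho$.
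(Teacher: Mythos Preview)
Your proposal is reasonable and essentially correct in spirit, but you should know that the paper does not actually prove this theorem: it simply cites it as a special case of Colonius and Lettau \cite[Theorem 3.6]{ColL16} (with ``worlds'' $W^{\alpha}=\mathbb{R}^{d}$). So the paper's ``proof'' is a one-line reference, whereas you have sketched the underlying argument.

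That said, your sketch is in line with how the cited result is established, and the paper's own Remark~\ref{Remark_near} confirms this: the proof in \cite{ColL16} does proceed via continuous dependence of trajectories $\varphi^{\alpha}(\cdot,x,u)$ on $\alpha$, uniformly over the relevant finite data, exactly as you outline for part~(i). Your compactness/finite-cover step is the genuine work, and you correctly flag it as the delicate point; your exhaustion argument for part~(ii) and the lower semicontinuity deduction are standard and fine (for $\alpha\mapsto\mathrm{cl}\,D^{\alpha}$ you implicitly use $\mathrm{cl}\,D^{\alpha_0}=\mathrm{cl}(\mathrm{int}\,D^{\alpha_0})$, which holds under accessibility). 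One small point: your claim that the accessibility rank condition ``persists for $\alpha$ near $\alpha_{0}$'' is true locally (the rank condition is open in $(\alpha,x)$), but the theorem only assumes (\ref{ARC}) at $\alpha_{0}$, so you should be careful to invoke accessibility only on a fixed compact neighborhood of $K$ where the rank is preserved by continuity. In short, the paper outsources the proof; your route reconstructs it, and what you gain is a self-contained argument, at the cost of having to carry out the finite-cover bookkeeping that \cite{ColL16} already packages.
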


This is a special case of Colonius and Lettau \cite[Theorem 3.6]{ColL16} (in
our case, the \textquotedblleft worlds\textquotedblright\ $W^{\alpha
}=\mathbb{R}^{d}$).

\begin{remark}
\label{Remark_near}The proof of Theorem \ref{Theorem_parameter1}(i) provides
the following more precise information. Let $\varphi^{\alpha_{0}}(T,x,u)=y$
for $x,y\in K$. Then for every $\varepsilon>0$ the trajectories of the system
with parameter $\alpha$ satisfying $\varphi^{\alpha}(T^{\alpha},x,u^{\alpha
})=y$ may be chosen with Hausdorff distance $d_{H}(\{\varphi^{\alpha
}(t,x,u^{\alpha})\left\vert t\in\lbrack0,T^{\alpha}]\right.  \},\{\varphi
^{\alpha_{0}}(t,x,u)\left\vert t\in\lbrack0,T]\right.  \})<\varepsilon$ for
$\left\Vert \alpha-\alpha_{0}\right\Vert <\delta$. Here $\delta$ may be chosen
independently of $x,y\in K$.
\end{remark}

The following definition of local control sets replaces the global maximality
property of control sets by a local property, cf. Colonius and Spadini
\cite[Definition 2.2]{ColS03} slightly generalized here.

\begin{definition}
\label{Definition_local}A bounded set $D_{loc}\subset\mathbb{R}^{d}$ is called
a local control set of system (\ref{3.1}) if there exists a neighborhood $V$
of $\mathrm{cl}D_{loc}$ with the following properties: (i) for all $x\in
D_{loc}$ there is a control $u\in\mathcal{U}$ such that $\varphi(t,x,u)\in
D_{loc}$ for all $t\geq0$, (ii) for all $x,y\in D_{loc}$ there exist $T>0$ and
a control $u$ such that $\varphi(t,x,u)\in V$ for all $t\in\lbrack0,T]$ and
$d(\varphi(T,x,u),y)<\varepsilon$, and (iii) $D_{loc}$ is maximal with these properties.
\end{definition}

The results above for control sets remain valid for local control sets. In the
proofs, one simply has to restrict the attention to the isolating neighborhood
$V$ of $\mathrm{cl}D_{loc}$.

The linearization of (\ref{control_aff}) in an equilibrium $(x_{0}%
,0)\in\mathbb{R}^{d}\times\mathbb{R}^{m}$ with $0=f_{0}(x_{0})$ is the control
system%
\begin{equation}
\dot{y}(t)=Ay(t)+Bv(t)\text{ with }A:=\frac{df_{0}(x_{0})}{dx},\quad
B:=\left[  f_{1}(x_{0}),\ldots,f_{m}(x_{0})\right]  . \label{lin}%
\end{equation}
This system is controllable if and only if $\mathrm{rank}[B~AB~\ldots
~A^{d-1}B]=d$.

Local control sets with small control ranges satisfy the following uniqueness
property, cf. \cite[Theorem 5.1]{ColS03}.

\begin{theorem}
\label{Theorem3.6}Consider\ for a family of control-affine systems of the form
(\ref{control_aff}) a hyperbolic equilibrium $x_{0}$ of the uncontrolled
system (\ref{uncontrolled}) and assume that the system linearized in
$(x_{0},0)\in\mathbb{R}^{d}\times\mathbb{R}^{m}$ is controllable. Then there
exist $\rho_{0}>0$ and $\delta_{0}>0$ such that for all $\rho\in(0,\rho_{0})$
the ball $\mathbf{B}(x_{0},\delta_{0})$ contains exactly one local control set
$D_{loc}^{\rho}$ with nonvoid interior.
\end{theorem}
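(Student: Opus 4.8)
The plan is to combine the local control set machinery from \cite{ColS03} with a contradiction argument based on the continuity of the local control set under shrinking of the control range, together with the hyperbolicity of $x_0$. First I would recall that since the linearization \eqref{lin} at $(x_0,0)$ is controllable, the accessibility rank condition \eqref{ARC} holds in a neighborhood of $x_0$ for every $\rho>0$ (the Lie algebra generated by $f_0,f_1,\dots,f_m$ contains, near $x_0$, the vectors $B,AB,\dots,A^{d-1}B$ up to higher-order terms), so the system is locally accessible near $x_0$. Next, a standard fact (cf. \cite[Chapter 3]{ColK00} and \cite{ColS03}) is that controllability of the linearization at a hyperbolic equilibrium forces the existence of at least one local control set $D_{loc}^{\rho}$ with nonvoid interior in any sufficiently small ball around $x_0$, for all small $\rho$; indeed $x_0$ lies in the closure of $D_{loc}^{\rho}$ and small neighborhoods of $x_0$ are reachable and controllable in small time using small controls. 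So existence of \emph{at least} one is essentially known; the content of the theorem is \emph{uniqueness} and the fact that $\rho_0,\delta_0$ can be chosen uniformly.

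For uniqueness I would argue by contradiction. Suppose no such $\rho_0,\delta_0$ exist; then there are sequences $\rho_n\downarrow 0$ and $\delta_n\downarrow 0$ (we may take $\delta_n\to 0$ since a ball that works for some $\delta_0$ works for all smaller radii, so failure means the radius must be driven to zero) such that $\mathbf{B}(x_0,\delta_n)$ contains two distinct local control sets $D_n^1\neq D_n^2$ with nonvoid interior. Pick $x_n^j\in\operatorname{int}D_n^j$. Since both sequences lie in $\mathbf{B}(x_0,\delta_n)$ with $\delta_n\to 0$, we have $x_n^j\to x_0$. The key step is to rescale: near the hyperbolic equilibrium, write $x=x_0+\varepsilon z$ and $u=\varepsilon v$; in the $z$-coordinates the system converges, as $\varepsilon\to 0$, to the controllable linear system \eqref{lin}, for which $\mathbb{R}^d$ is the unique control set (a controllable linear system with $0\in\operatorname{int}U$ has a single control set, all of $\mathbb{R}^d$, or at least a single one containing a neighborhood of the origin). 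Using the lower-semicontinuity / continuity results for control sets under perturbation of the vector fields (Theorem \ref{Theorem_parameter1} applied with $\alpha=\varepsilon$, suitably adapted to local control sets as noted after Definition \ref{Definition_local}), two distinct local control sets in a shrinking ball cannot persist: for $n$ large the rescaled versions of $D_n^1$ and $D_n^2$ would both have to approximate the unique control set of the limiting linear system on a fixed ball, forcing them to intersect, hence to coincide by maximality. This contradiction proves the claimed uniqueness.

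The main obstacle I anticipate is making the blow-up/rescaling argument rigorous at the level of \emph{local} control sets: one must verify that the isolating neighborhoods $V$ in Definition \ref{Definition_local} can be chosen compatibly with the rescaling (i.e. that the local control sets, after blow-up, live in balls of a \emph{fixed} radius in $z$-space, not shrinking ones), and that the convergence of the rescaled control systems to the linear system \eqref{lin} is uniform enough on that fixed ball to transfer the uniqueness of control sets across the limit — this is where the hyperbolicity of $x_0$ and the quantitative estimates behind Theorem \ref{Theorem_parameter1} and Remark \ref{Remark_near} do the real work. An alternative, perhaps cleaner route that avoids the blow-up is to invoke directly the relevant uniqueness statement of Colonius and Spadini \cite[Theorem 5.1]{ColS03}, of which the present theorem is essentially the specialization to family \eqref{control_aff}; in the write-up I would state the theorem as such a specialization and indicate the two points above as the only things needing checking, namely local accessibility from controllability of the linearization and the applicability of \cite[Theorem 5.1]{ColS03}.
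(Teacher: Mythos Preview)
The paper does not prove Theorem~\ref{Theorem3.6} at all: it simply records it as a citation of \cite[Theorem~5.1]{ColS03}. Your final paragraph already identifies exactly this route, so in that sense your proposal subsumes what the paper does; the ``cleaner route'' you mention at the end \emph{is} the paper's entire argument.

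The rescaling/blow-up sketch you outline beforehand is a reasonable heuristic for why the result should hold, and is in the spirit of how such uniqueness statements are typically proved (reduce to the linearized system, which has a unique control set near the origin, and use lower semicontinuity to transfer this back). But note two soft spots you yourself flag: first, the claim that a controllable linear system with bounded control range has a \emph{single} control set requires care (the relevant fact is about the local picture near the equilibrium, not global uniqueness on $\mathbb{R}^d$); second, making the isolating neighborhoods in Definition~\ref{Definition_local} scale correctly is genuinely the technical heart of \cite{ColS03} and is not something Theorem~\ref{Theorem_parameter1} or Remark~\ref{Remark_near} give you off the shelf. For the purposes of this paper, the correct write-up is simply to state the theorem and cite \cite[Theorem~5.1]{ColS03}, as the authors do.
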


\section{Controllability near homoclinic bifurcations\label{Section4}}

In this section we analyze the control sets that occur near a homoclinic
bifurcation of the uncontrolled system. We consider control-affine systems in
$\mathbb{R}^{2}$ and $\mathbb{R}^{3}$ of the form%
\begin{equation}
\dot{x}(t)=f_{0}(\alpha,x(t))+\sum_{i=1}^{m}u_{i}(t)f_{i}(\alpha,x(t)),\quad
u(t)\in U^{\rho}:=\rho U, \label{4.1}%
\end{equation}
where $\alpha\in A\subset\mathbb{R}$, $0\in\mathrm{int}U$ with $U\subset
\mathbb{R}^{m}$ compact and convex, and $\rho>0$. We assume that for every
$\alpha\in A$ the system satisfies the assumptions on (\ref{3.1}).

These systems depend on the two parameters $(\alpha,\rho)\in A\times
(0,\infty)$. The corresponding control sets will be denoted by $D^{\alpha
,\rho}$ and an analogous notion is used for all other objects. The dependence
of control sets on the parameter $\alpha$ is described in Theorem
\ref{Theorem_parameter1} and the dependence on $\rho$ is described in Theorem
\ref{Theorem_inner}. Throughout this section, the nominal parameter value
$\alpha_{0}$ will be taken as $\alpha_{0}=0$.

\subsection{The planar case}

The following theorem analyzes the control sets when the uncontrolled planar
system undergoes a homoclinic bifurcation in the sense of Theorem
\ref{th:Andr}, and hence for $\alpha_{0}=0$ it has an orbit $\Gamma_{0}$
homoclinic to a saddle equilibrium point $x_{0}=0$ with saddle quantity
$\sigma_{0}=\lambda_{1}(0)+\lambda_{2}(0)\not =0$, a split function
$\beta(\alpha)$ with $\beta^{\prime}(0)\not =0$, and bifurcating limit cycles
$L_{\beta}$. Recall that we may write $\alpha=\alpha(\beta)$ for $\left\vert
\beta\right\vert $ small enough and $\alpha(0)=0$. We use the notation from
Theorem \ref{th:Andr} and, more explicitly, we assume that the limit cycle
$L_{\beta}$ exists for $0<\left\vert \beta\right\vert <\overline{\beta}$ or,
equivalently, for $0<\left\vert \alpha\right\vert <\overline{\alpha}%
:=\alpha(\overline{\beta})$. The following theorem shows that here two
families of control sets are generated depending on the two parameters $\rho$
and $\beta$.

\begin{theorem}
\label{Theorem4.1}Consider a two-parameter family of control-affine systems in
$\mathbb{R}^{2}$ of the form (\ref{4.1}). Suppose that the uncontrolled and
unperturbed system $\dot{x}=f_{0}(0,x)$ satisfies the assumptions of Theorem
\ref{th:Andr} and $\Gamma_{0}\cup x_{0}$ is a maximal chain transitive set.
Furthermore, assume that the accessibility rank condition (\ref{ARC}) holds
for $\alpha_{0}=0$ and the following inner pair condition holds for all
$\beta$ with $0<\left\vert \beta\right\vert <\overline{\beta}$ and all
$x\in\mathbb{R}^{2}$:%
\begin{equation}
\text{For all }\rho>0\text{ there is }T>0\text{ such that }\varphi
^{\alpha(\beta)}(T,x,0)\in\mathrm{int}\mathcal{O}^{\alpha(\beta),\rho,+}(x).
\label{pair}%
\end{equation}

(i) Then there is a family of control sets $D_{0}^{\alpha(\beta),\rho}$,
defined for $\rho>0$ and $\beta\in\left(  -\beta_{0}(\rho),\beta_{0}%
(\rho)\right)  $ with $\beta_{0}(\rho)\in(0,\overline{\beta})$, satisfying for
all $\rho$ and $\beta$%
\begin{equation}
\Gamma_{0}\cup x_{0}\subset\mathrm{int}D_{0}^{\alpha(\beta),\rho}\text{ and
}\Gamma_{0}\cup x_{0}=\bigcap\nolimits_{\rho>0}D_{0}^{0,\rho}. \label{4.1b}%
\end{equation}

(ii) If $\sigma_{0}<0$ there is a family of control sets $D_{1}^{\alpha
(\beta),\rho}$, defined for $\rho>0$ and $\beta\in(0,\overline{\beta})$,
satisfying for all $\rho$ and $\beta$%
\begin{equation}
L_{\beta}\subset\mathrm{int}D_{1}^{\alpha(\beta),\rho}\text{ and }L_{\beta
}=\bigcap\nolimits_{\rho>0}D_{1}^{\alpha(\beta),\rho}. \label{4.1a}%
\end{equation}

(iii) If $\sigma_{0}>0$ there is family of control sets $D_{1}^{\alpha
(\beta),\rho}$ defined for $\rho>0$ and $\beta\in(-\overline{\beta},0)$, such
that (\ref{4.1a}) holds for all $\rho$ and $\beta$.
\end{theorem}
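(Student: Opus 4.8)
The plan is to obtain both families of control sets from the two general results of Section \ref{Section3}: Theorem \ref{Theorem_inner} (control sets for small control ranges) and Theorem \ref{Theorem_parameter1} (dependence on an external parameter). For part (i) I would first work at $\alpha_0=0$. Here $K:=\Gamma_0\cup x_0$ is a compact maximal chain transitive set of $\dot x=f_0(0,x)$ by hypothesis and the accessibility rank condition (\ref{ARC}) holds. The inner pair condition required by Theorem \ref{Theorem_inner} at $\alpha_0=0$ is the instance $\beta=0$ of (\ref{pair}); it is in any case implied by the span condition (\ref{NamA}) at $x_0$, since (\ref{NamA}) is open in the base point and the forward uncontrolled orbit of every $x\in\Gamma_0\cup x_0$ accumulates at $x_0$, so that a sufficiently large $T$ carries $\psi(T,x)$ into a region where (\ref{NamA}) holds. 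Theorem \ref{Theorem_inner} then gives an increasing family $D_0^{0,\rho}$, $\rho>0$, of control sets of (\ref{4.1}) at $\alpha=0$ with $\Gamma_0\cup x_0\subset\mathrm{int}\,D_0^{0,\rho}$ and $\Gamma_0\cup x_0=\bigcap_{\rho>0}D_0^{0,\rho}$; this is the second identity in (\ref{4.1b}) and the case $\beta=0$ of the first.

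Next, fixing $\rho>0$ and viewing (\ref{4.1}) as a family in the parameter $\alpha$, I would apply Theorem \ref{Theorem_parameter1}(i) with the compact set $K=\Gamma_0\cup x_0\subset\mathrm{int}\,D_0^{0,\rho}$: this produces $\delta_K(\rho)>0$ and, for all $\alpha$ with $|\alpha|<\delta_K(\rho)$, a unique control set $D_K^{\alpha,\rho}$ with $K\subset\mathrm{int}\,D_K^{\alpha,\rho}$, equal to $D_0^{0,\rho}$ for $\alpha=0$. Since $\alpha(\cdot)$ is continuous with $\alpha(0)=0$, one picks $\beta_0(\rho)\in(0,\overline{\beta})$ with $|\alpha(\beta)|<\delta_K(\rho)$ for $|\beta|<\beta_0(\rho)$ and sets $D_0^{\alpha(\beta),\rho}:=D_K^{\alpha(\beta),\rho}$; this completes (i). The dependence of $\beta_0$ on $\rho$ is unavoidable: as $\rho\downarrow 0$ the control set $D_0^{0,\rho}$ shrinks down to $\Gamma_0\cup x_0$, so the room available for the $\alpha$-perturbation, hence $\delta_K(\rho)$ and $\beta_0(\rho)$, shrinks with it.

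For parts (ii) and (iii) I would fix $\beta$ in the stated range and apply Theorem \ref{Theorem_inner} to the control range parameter $\rho$ at the fixed parameter value $\alpha(\beta)$, with $K=L_\beta$. By Theorem \ref{th:Andr}, $L_\beta$ is the unique limit cycle near $\Gamma_0$: asymptotically stable for $\sigma_0<0$, $\beta>0$, and unstable for $\sigma_0>0$, $\beta<0$. The hypotheses of Theorem \ref{Theorem_inner} still to be verified are: that $L_\beta$ is a compact maximal chain transitive set of $\dot x=f_0(\alpha(\beta),x)$ (see below); that the accessibility rank condition holds for the vector fields $f_i(\alpha(\beta),\cdot)$, which follows from its validity at $\alpha_0=0$ together with continuity of the iterated Lie brackets in $\alpha$, accessibility in a neighborhood of the compact set $L_\beta$ being all that is needed once one restricts to the isolating neighborhood of a local control set (Definition \ref{Definition_local}); and the inner pair condition on $L_\beta\times\mathcal{U}$, which is contained in (\ref{pair}). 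Theorem \ref{Theorem_inner} then yields the increasing family $D_1^{\alpha(\beta),\rho}$, $\rho>0$, with $L_\beta\subset\mathrm{int}\,D_1^{\alpha(\beta),\rho}$ and $L_\beta=\bigcap_{\rho>0}D_1^{\alpha(\beta),\rho}$, which is (\ref{4.1a}); for $\sigma_0<0$ these control sets are moreover invariant for $\rho$ small, since $L_\beta$ is then asymptotically stable.

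The step I expect to be the main obstacle is the maximality of $L_\beta$ as a chain transitive set, especially in the repelling case of part (iii). For $\sigma_0<0$ this is standard: an asymptotically stable periodic orbit is a (Conley) attractor, and being connected and chain transitive it coincides with the chain transitive component containing it. For $\sigma_0>0$, $L_\beta$ is a planar hyperbolic cycle repelling on both sides; here I would argue directly that $L_\beta$ has no strictly larger chain transitive superset. On each side of the Jordan curve $L_\beta$ there is a thin annular region on which the flow drives points away from $L_\beta$; hence a point not on $L_\beta$ cannot be carried by the flow --- in particular not by flow segments of length at least $T$ --- into the $\varepsilon$-neighborhood of $L_\beta$ once $\varepsilon$ is below the width of these annuli, so for small $\varepsilon$ every $(\varepsilon,T)$-chain that ends on $L_\beta$ has all of its points on $L_\beta$, and any chain transitive set containing $L_\beta$ must equal $L_\beta$. (Equivalently, $L_\beta$ is an attractor for the time-reversed flow, whose chain transitive components coincide with those of the flow.) A secondary point requiring care is the degenerate endpoint $\beta=0$ in part (i): there the uncontrolled trajectory through $x_0$ is stationary, so the inner pair condition at $x_0$ must be taken from (\ref{NamA}) or assumed outright, since (\ref{pair}) is posited only for $\beta\ne0$.
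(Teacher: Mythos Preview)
Your approach is essentially the same as the paper's: for (i), apply Theorem \ref{Theorem_inner} at $\alpha_0=0$ to the maximal chain transitive set $\Gamma_0\cup x_0$ and then perturb in $\alpha$ via Theorem \ref{Theorem_parameter1}; for (ii)--(iii), apply Theorem \ref{Theorem_inner} at the fixed parameter $\alpha(\beta)$ to the limit cycle $L_\beta$. The paper's proof is terser and simply asserts that $L_\beta$ is a maximal chain transitive set and that the hypotheses of Theorem \ref{Theorem_inner} are met, whereas you take the trouble to justify the maximality of $L_\beta$ (via its attractor/repeller character), the persistence of the accessibility rank condition for $\alpha\neq 0$, and the inner-pair issue at $\beta=0$; these are genuine lacunae in the paper's presentation that your write-up handles correctly.
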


\begin{proof}
(i): Since the set $\Gamma_{0}\cup x_{0}$ is a maximal chain transitive set
and the inner pair condition (\ref{pair}) holds, Theorem \ref{Theorem_inner}
shows that there is an increasing family of control sets $D_{0}^{0,\rho}%
,\rho>0$, of (\ref{4.1}) with $\alpha_{0}=0$ such that%
\[
\Gamma_{0}\cup x_{0}\subset\mathrm{int}D_{0}^{0,\rho}\text{ and }\Gamma
_{0}\cup x_{0}=\bigcap\nolimits_{\rho>0}D_{0}^{0,\rho}.
\]
Since the accessibility rank condition (\ref{ARC}) holds for $\alpha_{0}=0$,
Theorem \ref{Theorem_parameter1} shows that for every $\rho>0$ and some
$\alpha_{0}(\rho)>0$ there is a unique lower semicontinuous family of control
sets $D_{0}^{\alpha,\rho}$ with parameters $\left\vert \alpha\right\vert
<\alpha_{0}(\rho)$ containing $\Gamma_{0}\cup x_{0}$ in the interior. With
$\beta_{0}(\rho)=\beta(\alpha_{0}(\rho))$ assertion (i) follows.

(ii) and (iii)\textit{:} By Theorem \ref{th:Andr} there is a neighborhood
$U_{0}$ of $\Gamma_{0}\cup x_{0}$ in which a unique limit cycle $L_{\beta
},\left\vert \beta\right\vert \in(0,\overline{\beta})$, bifurcates from
$\Gamma_{0}$. If $\sigma_{0}<0$ the limit cycle exists and is asymptotically
stable for $\beta>0$, and if $\sigma_{0}>0$ it exists and is unstable for
$\beta<0$. The limit cycle $L_{\beta}$ is a maximal chain transitive set for
the uncontrolled equation $\dot{x}=f_{0}(\alpha(\beta),x)$, hence Theorem
\ref{Theorem_inner} shows that for every limit cycle $L_{\beta}$ there is an
increasing family of control sets $D_{1}^{\alpha(\beta),\rho},\rho>0$, of
(\ref{4.1}) with (\ref{4.1a}).
\end{proof}

\begin{remark}
Theorem \ref{th:Andr} does not yield any information about the behavior of the
uncontrolled system outside of some neighborhood of the homoclinic orbit.
Hence $\Gamma_{0}\cup x_{0}$ may be a maximal chain transitive set only in an
isolating neighborhood. In that case, the sets $D_{0}^{\alpha(\beta),\rho}$
will only be local control sets, cf. Definition \ref{Definition_local}.
\end{remark}

\begin{remark}
Theorem \ref{Theorem4.1} shows that we may consider $(\beta,\rho)$ as the
parameters which determine the behavior of the control sets. In assertions
(i)-(iii), for $i=1,2$ the maps $\rho\mapsto D_{i}^{\alpha(\beta),\rho}$ are
increasing for every $\beta$ and by Theorem \ref{Theorem_parameter1} the maps
$\beta\mapsto D_{i}^{\alpha(\beta),\rho}$ are lower semicontinuous for every
$\rho$.
\end{remark}

\begin{remark}
\label{Remark_HS1}H\"{a}ckl and Schneider \cite{HS} consider control sets near
a Takens-Bogdanov singularity, analytically and numerically, for%
\[
\dot{x}=y,~\dot{y}=\lambda_{1}+\lambda_{2}x+x^{2}+xy+u(t),\quad u(t)\in
\lbrack-\rho,\rho].
\]
Here for all parameters $(\lambda_{1},\lambda_{2})\in\mathbb{R}^{2}$ the
bifurcation behavior of the uncontrolled equation is known. For parameters
$(\lambda_{1},\lambda_{2})$ in a subset $k_{S}\subset\mathbb{R}^{2}$ a
homoclinic bifurcation occurs and one obtains a control set containing the
homoclinic orbit, cf. \cite[Figure 4]{HS} (and an invariant control set around
the asymptotically stable focus surrounded by the homoclinic orbit). For
$(\lambda_{1},\lambda_{2})$ in $C\subset\mathbb{R}^{2}$ an unstable periodic
orbit has bifurcated from the homoclinic orbit. It is contained in a variant
control set, cf. \cite[Figure 2]{HS}.
\end{remark}

\begin{remark}
While in Theorem \ref{Theorem4.1} the homoclinic orbit vanishes for
$\beta\not =0$, the implicit function theorem implies that there are
hyperbolic equilibria $x_{\alpha(\beta)}$ for the uncontrolled system which
depend continuously on $\beta$. The local behavior near these equilibria will
play a certain role in the proof of Theorem \ref{Theorem_homoclinic}.
\end{remark}

\begin{remark}
\label{Remark_x_beta}The Index Theorem (see Wiggins \cite[Corollary
6.0.2]{S2003}) implies that inside any limit cycle $L_{\beta}$ of the
uncontrolled system there is at least one fixed point $x_{2}^{\alpha(\beta)}$.
Under the inner pair condition, one finds by Theorem \ref{Theorem_inner}
control sets with $x_{2}^{\alpha(\beta)}\in\mathrm{int}D_{2}^{\alpha
(\beta),\rho}$.
\end{remark}

Theorem \ref{Theorem4.1} does not answer the question, when the control sets
$D_{0}^{\alpha(\beta),\rho}$ and $D_{1}^{\alpha(\beta),\rho}$ coincide, in
their common range of definition. The following corollary shows, in
particular, how this equality depends on the relation between the parameters
$\beta$ and $\rho$. For simplicity we suppose that $\sigma_{0}<0$. Thus both
control sets $D_{0}^{\alpha(\beta),\rho}$ and $D_{1}^{\alpha(\beta),\rho}$
exist for $\rho>0$ and $\beta\in(0,\beta_{0}(\rho))$.

\begin{corollary}
\label{Corollary_relation}Let the assumptions of Theorem \ref{Theorem4.1} be
satisfied and assume that $\sigma_{0}<0$.

(i) For every $\beta\in(0,\overline{\beta})$ there is $\rho_{1}(\beta)>0$ such
that for all $\rho\in(0,\rho_{1}(\beta)]$ the set $D_{1}^{\alpha(\beta),\rho}$
is an invariant control set.

(ii) For every $\rho>0$ there is $\underline{\beta}(\rho)>0$ such that for all
$\beta\in(0,\underline{\beta}(\rho))$ the control sets coincide,
$D_{0}^{\alpha(\beta),\rho}=D_{1}^{\alpha(\beta),\rho}$.

(iii) If $D_{0}^{\alpha(\beta),\rho}$ is a variant control set for some
$\beta\in(0,\overline{\beta}),\rho\in(0,\rho_{1}(\beta)]$, then $D_{0}%
^{\alpha(\beta),\rho}\not =D_{1}^{\alpha(\beta),\rho}$.
\end{corollary}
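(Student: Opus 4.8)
\textbf{Proof proposal for Corollary \ref{Corollary_relation}.}

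The plan is to exploit the asymptotic stability of the limit cycle $L_\beta$ (which holds under $\sigma_0<0$ and $\beta>0$ by Theorem \ref{th:Andr}) together with the monotonicity and limiting behavior of the two families of control sets in $\rho$ and $\beta$. For part (i), I would argue as follows. Fix $\beta\in(0,\overline\beta)$. Since $L_\beta$ is an asymptotically stable periodic orbit of the uncontrolled system $\dot x=f_0(\alpha(\beta),x)$, the last sentence of Theorem \ref{Theorem_inner} applies directly to the family $D_1^{\alpha(\beta),\rho}$: there is $\rho_1(\beta)>0$ such that $D_1^{\alpha(\beta),\rho}$ is an invariant control set for all $\rho\in(0,\rho_1(\beta)]$. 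That is the whole content of (i); the only thing to check is that the hypotheses of Theorem \ref{Theorem_inner} (maximal chain transitivity of $L_\beta$, accessibility rank condition, inner pair condition) are in force for the fixed parameter $\alpha(\beta)$, which is part of the standing assumptions of Theorem \ref{Theorem4.1}.

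For part (ii), fix $\rho>0$. The control set $D_0^{\alpha(\beta),\rho}$ is, by construction in Theorem \ref{Theorem4.1}(i), obtained from $D_0^{0,\rho}$ by the parameter-perturbation Theorem \ref{Theorem_parameter1}, and it contains $\Gamma_0\cup x_0$ in its interior for all $\beta\in(-\beta_0(\rho),\beta_0(\rho))$. The key geometric observation is that for $\rho$ fixed, $D_0^{0,\rho}$ contains an open neighborhood of $\Gamma_0\cup x_0$; by lower semicontinuity of $\beta\mapsto D_0^{\alpha(\beta),\rho}$ at $\beta=0$ (Remark after Theorem \ref{Theorem_parameter1}, or Remark \ref{Remark_near}), for $\beta$ small this open neighborhood is still contained in $D_0^{\alpha(\beta),\rho}$. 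On the other hand, the limit cycle $L_\beta$ bifurcates from $\Gamma_0$, so $L_\beta\to\Gamma_0\cup x_0$ in the Hausdorff metric as $\beta\to 0^+$; hence there is $\underline\beta(\rho)>0$ such that $L_\beta\subset\mathrm{int}D_0^{\alpha(\beta),\rho}$ for all $\beta\in(0,\underline\beta(\rho))$. But $D_1^{\alpha(\beta),\rho}$ is the control set containing $L_\beta$ in its interior, and two control sets with nonvoid intersection coincide (the maximality property recorded right after Definition \ref{def:Dset}). Therefore $D_0^{\alpha(\beta),\rho}=D_1^{\alpha(\beta),\rho}$ on that range of $\beta$.

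For part (iii), I would argue by contraposition using (i). Suppose $\beta\in(0,\overline\beta)$ and $\rho\in(0,\rho_1(\beta)]$, so that by (i) the set $D_1^{\alpha(\beta),\rho}$ is an invariant control set. If $D_0^{\alpha(\beta),\rho}=D_1^{\alpha(\beta),\rho}$, then $D_0^{\alpha(\beta),\rho}$ is invariant as well, contradicting the hypothesis that $D_0^{\alpha(\beta),\rho}$ is variant. Hence $D_0^{\alpha(\beta),\rho}\neq D_1^{\alpha(\beta),\rho}$.

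The main obstacle I anticipate is making the Hausdorff-convergence step in part (ii) fully rigorous, i.e., combining two convergences that go in opposite directions: the control set $D_0^{\alpha(\beta),\rho}$ shrinks at most lower-semicontinuously toward something containing $\Gamma_0\cup x_0$, while $L_\beta$ expands toward $\Gamma_0\cup x_0$. One must ensure there is a genuine fixed open set — say a tubular neighborhood $U_0$ of $\Gamma_0\cup x_0$ on which $\Gamma_0\cup x_0$ is maximal chain transitive and which sits inside $\mathrm{int}D_0^{0,\rho}$ — that remains inside $D_0^{\alpha(\beta),\rho}$ for $|\beta|$ small; Remark \ref{Remark_near} (uniform-in-$x$ choice of $\delta$) is what lets one promote pointwise lower semicontinuity to the inclusion of a whole compact piece of $\mathrm{int}D_0^{0,\rho}$. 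Care is also needed because $D_0$ may only be a local control set (the Remark following Theorem \ref{Theorem4.1}), but since all the arguments take place inside the isolating neighborhood $V$ of $\mathrm{cl}D_{0,loc}$, the reasoning goes through verbatim for local control sets.
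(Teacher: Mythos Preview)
Your proposal is correct and follows essentially the same route as the paper. For (i) and (iii) the arguments are identical; for (ii) the paper makes your ``fixed open neighborhood'' idea precise by first choosing $\beta'(\rho)$ so that $L_\beta\subset\mathrm{int}D_0^{0,\rho}$ for $\beta\in(0,\beta'(\rho)]$, setting $K:=(\Gamma_0\cup x_0)\cup\bigcup_{\beta\in(0,\beta'(\rho)]}L_\beta$, applying Theorem~\ref{Theorem_parameter1} to this single compact $K$ to obtain $\beta''(\rho)$ with $K\subset\mathrm{int}D_0^{\alpha(\beta),\rho}$, and taking $\underline\beta(\rho)=\min\{\beta'(\rho),\beta''(\rho)\}$---which is exactly the resolution of the obstacle you anticipated.
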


\begin{proof}
(i) This follows from Theorem \ref{Theorem_inner}, since the periodic orbits
$L_{\beta}$ are asymptotically stable.

(ii) Fix $\rho>0$. Since $\Gamma_{0}\cup x_{0}\subset\mathrm{int}D_{0}%
^{0,\rho}$ and the periodic orbits $L_{\beta},\beta>0$, bifurcate from this
homoclinic orbit, it follows that there is $\beta^{\prime}(\rho)>0$ such that
$L_{\beta}\subset\mathrm{int}D_{0}^{0,\rho}$ for all $\beta\in\left(
0,\beta^{\prime}(\rho)\right]  $. Define a compact set $K\subset D_{0}%
^{0,\rho}$ by%
\[
K:=\left(  \Gamma_{0}\cup x_{0}\right)  \cup\bigcup\nolimits_{\beta\in
(0,\beta^{\prime}(\rho)]}L_{\beta}.
\]
By Theorem \ref{Theorem_parameter1} it follows that there is $\beta
^{\prime\prime}(\rho)\in(0,\beta_{0}(\rho))$ such that for all $\beta
\in(0,\beta^{\prime\prime}(\rho))$ the inclusion $K\subset\mathrm{int}%
D_{0}^{\alpha(\beta),\rho}$ holds. Thus for $\underline{\beta}(\rho
):=\min\left\{  \beta^{\prime}(\rho),\beta^{\prime\prime}(\rho)\right\}  $ it
follows that%
\[
L_{\beta}\subset\mathrm{int}D_{0}^{\alpha(\beta),\rho}\text{ and hence }%
D_{0}^{\alpha(\beta),\rho}=D_{1}^{\alpha(\beta),\rho}\text{ for all }\beta
\in(0,\underline{\beta}(\rho)).
\]

(iii) By assertion (i) $D_{1}^{\alpha(\beta),\rho}$ is an invariant control
set, hence it cannot coincide with the variant control set $D_{0}%
^{\alpha(\beta),\rho}$.
\end{proof}

\begin{remark}
\label{Remark4.8}Corollary \ref{Corollary_relation} reveals the subtle
relation between the size of the control range determined by $\rho$ and the
bifurcation parameter $\beta$. In assertion (i), the control set
$D_{1}^{\alpha(\beta),\rho}$ around the asymptotically stable periodic orbit
$L_{\beta}$ is invariant for small $\rho>0$; here one will expect $\rho
_{1}(\beta)\rightarrow0$ for $\beta\rightarrow0$. In assertion (ii), $\rho>0$
is fixed and the homoclinic orbit is contained in the interior of the control
set $D_{0}^{0,\rho}$. Since $L_{\beta}\rightarrow\Gamma_{0}\cup x_{0}$ for
$\beta\rightarrow0$, it follows that $L_{\beta}\subset D_{0}^{0,\rho}$ for
$\beta$ small enough; here $\underline{\beta}(\rho)\rightarrow0$ for
$\rho\rightarrow0$. In assertion (iii), $\beta\in(0,\overline{\beta}),\rho
\in(0,\rho_{1}(\beta)]$ is small enough to guarantee by (i) that
$D_{1}^{\alpha(\beta),\rho}$ is invariant. If $D_{0}^{\alpha(\beta),\rho}$ is
variant, this implies that the control sets cannot coincide. In view of (ii),
this can only happen if $\beta\geq\underline{\beta}(\rho)$, hence $\rho$ must
be small enough. The assumption that $D_{0}^{\alpha(\beta),\rho}$ is variant
appears to be mild, since the hyperbolic equilibrium satisfies $x_{0}%
\in\mathrm{int}D_{0}^{\alpha(\beta),\rho}$ for $\rho>0$ and $\beta\in
\lbrack0,\beta_{0}(\rho))$, and hence all points on the unstable manifold of
$x_{0}$ can be reached from $D_{0}^{\alpha(\beta),\rho}$. See the example in
Section \ref{Section5} for an illustration.
\end{remark}

The next theorem shows that the qualitative behavior of the control system can
be different from the behavior of the uncontrolled system. More precisely, we
find parameter regions where there is no homoclinic orbit and no limit cycle
for the uncontrolled system while there exist periodic orbits of the control
system\ which are arbitrarily close to the homoclinic orbit.

\begin{theorem}
\label{Theorem_homoclinic}Let the assumptions of Theorem \ref{Theorem4.1} be
satisfied and assume, additionally, that the system with $\alpha_{0}=0$
linearized in $(0,0)\in\mathbb{R}^{d}\times\mathbb{R}^{m}$ is controllable.
Then there is a neighborhood $U_{1}$ of the homoclinic orbit $\Gamma_{0}\cup
x_{0}$ such that for every $\delta>0$ there are a nonvoid parameter region
$A\subset\mathbb{R}$ and $\rho_{0}>0$ such that

(i) for $\alpha\in A$ and $\rho\in(0,\rho_{0})$ there are periodic orbits
$\varphi^{\alpha}(\cdot,y,u)\subset D_{0}^{\alpha,\rho},u\in\mathcal{U}^{\rho
}$, with Hausdorff distance $d_{H}(\varphi^{\alpha}(\cdot,y,u),\Gamma_{0}\cup
x_{0})<\delta$;

(ii) for $\alpha\in A$ the uncontrolled system $\dot{x}=f_{0}(\alpha,x)$ has
no homoclinic orbit or periodic solution in $U_{1}$ except for the hyperbolic
equilibrium $x_{\alpha}$.
\end{theorem}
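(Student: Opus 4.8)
The plan is to exploit the gap between the controlled dynamics near the homoclinic orbit and the (bifurcating) uncontrolled dynamics, and to choose the parameter region $A$ on the side of $\beta=0$ where the uncontrolled system has \emph{no} limit cycle. Concretely, if $\sigma_0<0$ we will take $A$ to correspond to $\beta<0$ (``splitting down''), and if $\sigma_0>0$ we take $A$ to correspond to $\beta>0$; in both cases Theorem~\ref{th:Andr} guarantees that for $|\alpha|<\overline\alpha$, $\alpha\in A$, the uncontrolled equation $\dot x=f_0(\alpha,x)$ has neither a homoclinic orbit nor a limit cycle in a fixed neighborhood $U_1$ of $\Gamma_0\cup x_0$, only the hyperbolic equilibrium $x_\alpha$ furnished by the implicit function theorem. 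This immediately yields (ii), provided we keep $A\subset(-\overline\alpha,\overline\alpha)$ and shrink it as needed below.

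For (i), first fix $\delta>0$ and shrink $U_1$ so that $U_1\subset\mathbf B(x_0,\delta_0)$ near the saddle and $\operatorname{cl}U_1\subset D_0^{0,\rho}$ for all small $\rho>0$; the latter is possible because $\Gamma_0\cup x_0\subset\mathrm{int}D_0^{0,\rho}$ by \eqref{4.1b}. The key mechanism is that $x_0=0$ is a hyperbolic equilibrium at which the linearization is controllable, so by Theorem~\ref{Theorem3.6} there are $\rho_0>0$, $\delta_0>0$ with a local control set $D_{loc}^\rho\subset\mathbf B(x_0,\delta_0)$ of nonvoid interior for every $\rho\in(0,\rho_0)$; since $x_0$ persists as $x_\alpha$ and the linearization is an open condition, the same holds for the system with parameter $\alpha$, giving a local control set $D_{loc}^{\alpha,\rho}\ni x_\alpha$ in a fixed ball for $(\alpha,\rho)$ small. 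Now construct the periodic orbit by concatenating two controlled arcs: (a) an arc that shadows the homoclinic orbit $\Gamma_0$ from a point near $x_0$ along the ``outgoing'' part and back to a point near $x_0$ along the ``incoming'' part — this arc exists with arbitrarily small controls by the robustness of transversal homoclinic loops and by Remark~\ref{Remark_near} (with $x,y$ both near the saddle, $\varphi^{\alpha}(T^\alpha,x,u^\alpha)=y$ with trajectory within $\delta$ of $\Gamma_0\cup x_0$); and (b) a small correcting arc \emph{inside the local control set} $D_{loc}^{\alpha,\rho}$ near $x_\alpha$ that joins the endpoint of arc (a) back to its starting point, which is possible precisely because both endpoints can be taken in $\mathrm{int}D_{loc}^{\alpha,\rho}$ and points in the interior of a local control set are mutually reachable within the isolating neighborhood. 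The concatenation is a closed controlled trajectory $\varphi^\alpha(\cdot,y,u)$, hence periodic, lying in $U_1\subset D_0^{\alpha,\rho}$ (using \eqref{4.1b} and lower semicontinuity, Theorem~\ref{Theorem_parameter1}, to keep $U_1$ inside $D_0^{\alpha,\rho}$ after shrinking $A$), and with $d_H(\varphi^\alpha(\cdot,y,u),\Gamma_0\cup x_0)<\delta$ since arc (a) is $\delta$-close to $\Gamma_0\cup x_0$ and arc (b) stays in a ball of radius $<\delta$ about $x_\alpha\approx x_0$.

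The main obstacle is the construction and gluing of arc~(a): one must show that for $\alpha$ and $\rho$ small there is a \emph{controlled} orbit starting and ending in the interior of $D_{loc}^{\alpha,\rho}$ that follows $\Gamma_0$ around once and stays within Hausdorff distance $\delta$, even though for $\alpha\neq0$ (on our chosen side) the uncontrolled stable and unstable manifolds of $x_\alpha$ no longer connect — the split function has moved off zero. This is where the control is genuinely used: the ``missing'' reconnection of $W^{\alpha,u}(x_\alpha)$ to $W^{\alpha,s}(x_\alpha)$, of size $O(|\beta(\alpha)|)$ measured on the cross-section $\Sigma$, is bridged by a control of size $O(|\beta(\alpha)|)$ acting on a compact piece of the orbit away from the equilibrium (a transversality/implicit-function argument on the return map to $\Sigma$, with the control entering through the $f_i$ directions which span $\mathbb R^d$ near points on $\Gamma_0$ by the accessibility rank condition), so one chooses $A$ small enough that this required control amplitude is $<\rho$. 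The passage near the saddle is handled separately: inside $\mathbf B(x_0,\delta_0)$ one does not shadow $\Gamma_0$ but instead uses the local control set $D_{loc}^{\alpha,\rho}$ to move from the arrival point on $W^{s}$-side to the departure point on $W^{u}$-side, which is why Theorem~\ref{Theorem3.6} and the controllability of the linearization are invoked. Once arc~(a) and arc~(b) are in hand, periodicity and the distance estimate are routine, and (ii) follows from the Andronov–Leontovich classification on the chosen side of $\beta=0$.
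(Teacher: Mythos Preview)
Your overall architecture --- pick the side of $\beta=0$ where Theorem \ref{th:Andr} produces no limit cycle, invoke the local control set from Theorem \ref{Theorem3.6} near the saddle, and build a controlled periodic orbit by gluing a long arc around the loop to a short closing arc inside $D_{loc}^{\alpha,\rho}$ --- is exactly the paper's. What differs is your treatment of the long arc (a), and here you have made the problem harder than it is.

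You identify as the ``main obstacle'' the reconnection of $W^{\alpha,u}(x_\alpha)$ to $W^{\alpha,s}(x_\alpha)$ across the cross-section $\Sigma$, and propose to bridge the $O(|\beta(\alpha)|)$ gap by a control-perturbed implicit-function argument on the return map. But no such reconnection is required. The paper's arc (a) uses \emph{zero} control: pick $y\in\Gamma_0\cap\mathrm{int}D_{loc}^{0,\rho}$, choose $T>0$ with $\varphi^{0}(T,y,0)\in\mathrm{int}D_{loc}^{0,\rho}$ (possible since $\varphi^{0}(t,y,0)\to x_0$ as $t\to+\infty$), and then simply run the \emph{uncontrolled} system at parameter $\alpha$ from the \emph{same} initial point $y$ for the \emph{same} time $T$. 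Ordinary continuous dependence of solutions on the parameter gives $\varphi^{\alpha}(T,y,0)\in\mathrm{int}D_{loc}^{0,\rho}$ and Hausdorff closeness to $\Gamma_0$ on $[0,T]$ for $\alpha$ small --- the endpoint need not land on $W^{\alpha,s}(x_\alpha)$, only in the open local control set. Remark \ref{Remark_near} is then invoked solely for arc (b): at $\alpha_0=0$ there is a controlled path inside $D_{loc}^{0,\rho}$ from $\varphi^{\alpha}(T,y,0)$ back to $y$, and Remark \ref{Remark_near} (applied to the local control set, together with Theorem \ref{Theorem_parameter1}) perturbs this closing arc to parameter $\alpha$.

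Two smaller points. The phrase ``robustness of transversal homoclinic loops'' is misplaced: the planar homoclinic connection is precisely a \emph{nontransversal} coincidence of one-dimensional invariant manifolds, which is why it is a codimension-one phenomenon governed by the split function. And the accessibility rank condition does not assert that the $f_i$ themselves span $\mathbb{R}^d$, only that the Lie algebra generated by $f_0,\ldots,f_m$ does, so your sketched transversality argument on $\Sigma$ would need further justification --- though, as above, it is unnecessary.
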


\begin{proof}
Recall that the hyperbolic equilibrium $x_{0}$ yields for $\alpha$ near $0$
hyperbolic equilibria $x_{\alpha}$ which depend continuously on $\alpha$.
Suppose first that $\sigma_{0}<0$. Theorem \ref{th:Andr} shows that in a
neighborhood $U_{0}$ of $\Gamma_{0}\cup x_{0}$ for $\left\vert \alpha
\right\vert $ small enough a unique limit cycle $L_{\beta(\alpha)}$ bifurcates
from $\Gamma_{0}$. It exists if and only if $\beta(\alpha)>0$. We will
consider $\alpha$ with $\beta(\alpha)<0$, hence the uncontrolled equation
$\dot{x}=f_{0}(\alpha,x)$ has no periodic solution except for the equilibrium
$x_{\alpha}$ and assertion (ii) holds. For the proof of (i) it is convenient
to suppose that $\alpha(\beta)>0$ for $\beta<0$ (otherwise, we replace
$\alpha$ by $-\alpha$).

By Theorem \ref{Theorem4.1}(i) with $\beta=0$ there is for $\rho>0$ a control
set $D_{0}^{0,\rho}$ satisfying%
\[
\Gamma_{0}\cup x_{0}\subset\mathrm{int}D_{0}^{0,\rho}\text{ and }\Gamma
_{0}\cup x_{0}=\bigcap\nolimits_{\rho>0}D_{0}^{0,\rho}.
\]
By Theorem \ref{Theorem3.6} there are unique local control sets $D_{loc}%
^{0,\rho}$ such that the equilibrium $x_{0}$ of the uncontrolled system
satisfies
\[
x_{0}\in\mathrm{int}D_{loc}^{0,\rho}\text{ and }\{x_{0}\}=\bigcap
\nolimits_{\rho>0}D_{loc}^{0,\rho}.
\]
We can choose $\rho>0$ so small that%
\begin{equation}
\sup\left\{  \left\Vert z-x_{0}\right\Vert \left\vert z\in D_{loc}^{0,\rho
}\right.  \right\}  <\delta. \label{Haus1}%
\end{equation}
Since $D_{loc}^{0,\rho}\cap D_{0}^{0,\rho}\not =\varnothing$, it follows that
$D_{loc}^{0,\rho}\subset D_{0}^{0,\rho}$ and for $\delta>0$, small enough,
$D_{loc}^{0,\rho}\not =D_{0}^{0,\rho}$.

Let $y\in\Gamma_{0}\cap\mathrm{int}D_{loc}^{0,\rho}$. Since $\varphi
^{0}(t,y,0)\rightarrow x_{0}\in\mathrm{int}D_{loc}^{0,\rho}$ for
$t\rightarrow\pm\infty$, there is $T>0$ such that the homoclinic trajectory
satisfies $\varphi^{0}(t,y,0)\in\mathrm{int}D_{loc}^{0,\rho}$ for all
$\left\vert t\right\vert \geq T$ and $\varphi^{0}(\tau,y,0)$ is not in the
isolating neighborhood of $D_{loc}^{0,\rho}$ for some $\tau\in(0,T)$. By
continuous dependence of the solution on the parameter $\alpha$, there is
$\alpha_{0}(\rho)>0$ such that $\varphi^{\alpha}(T,y,0)\in\mathrm{int}%
D_{loc}^{0,\rho}$ for all $\alpha\in(0,\alpha_{0}(\rho)]$. Choose $\alpha
_{0}(\rho)$ small enough such that the Hausdorff distance
\begin{equation}
d_{H}\left(  \{\varphi^{\alpha}(t,y,0)\left\vert t\in\lbrack0,T]\right.
\},\Gamma_{0}\cup x_{0}\right)  <\delta\text{ for }\alpha\in(0,\alpha_{0}%
(\rho)], \label{Haus2}%
\end{equation}
where we use $\left\{  \varphi^{0}(t,y,0)\left\vert t\in\mathbb{R}\right.
\right\}  =\Gamma_{0}$. The compact set%
\[
K:=\left\{  y\right\}  \cup\left\{  \varphi^{\alpha}(T,y,0)\left\vert
\alpha\in\lbrack0,\alpha_{0}(\rho)]\right.  \right\}
\]
is contained in $\mathrm{int}D_{loc}^{0,\rho}$. Theorem
\ref{Theorem_parameter1} applied to local control sets implies that there is
$\alpha_{1}(\rho)\in(0,\alpha_{0}(\rho)]$ such that for all $\alpha\in
\lbrack0,\alpha_{1}(\rho)]$%
\[
\left\{  y\right\}  \cup\left\{  \varphi^{\alpha}(T,y,0)\left\vert \alpha
\in\lbrack0,\alpha_{1}(\rho)]\right.  \right\}  \subset K\subset
\mathrm{int}D_{loc}^{\alpha,\rho}.
\]
There are a control $u^{0}\in\mathcal{U}^{\rho}$ and a time $T^{0}>0$ such
that $\varphi^{0}(T^{0},\varphi^{\alpha}(T,y,0),u^{0})=y$. Then Remark
\ref{Remark_near} implies that one may choose $\alpha_{2}(\rho)\in
(0,\alpha_{1}(\rho)]$ such that for all $\alpha\in(0,\alpha_{2}(\rho))$ there
are $u^{\alpha}\in\mathcal{U}^{\rho}$ and $T^{\alpha}>0$ with $\varphi
^{\alpha}(T^{\alpha},\varphi^{\alpha}(T,y,0),u^{\alpha})=y$ and trajectories
$\varphi^{\alpha}(t,\varphi^{\alpha}(T,y,0),u^{\alpha}),\,t\in\lbrack
0,T^{\alpha}]$, arbitrarily close to $\varphi^{0}(t,\varphi^{\alpha
}(T,y,0),u^{0}),t\in\lbrack0,T^{0}]$, hence contained in $D_{loc}^{0,\rho}$.
By (\ref{Haus1}) and (\ref{Haus2}) this implies that the Hausdorff distance of
the resulting controlled periodic orbit to $\Gamma_{0}\cup x_{0}$ is smaller
than $\delta$, hence assertion (i) holds in the case $\sigma_{0}<0$..

For $\sigma_{0}>0$ consider $\alpha$ with $\beta(\alpha)>0$, where the
uncontrolled equation $\dot{x}=f_{0}(\alpha,x)$ has no periodic solution
except for the equilibrium $x_{\alpha}$. Then the assertion is proved analogously.
\end{proof}

\begin{remark}
\label{Remark_HS2}In their analysis of the Takens-Bogdanov equation, H\"{a}ckl
and Schneider \cite[Theorem 4.7]{HS} prove that there exist parameter values
and control ranges such that the control system has an at least doubly
connected control set while for all constant controls only equilibrium points
exist as limit sets. Here they use that the control directly affects the
bifurcation parameter $\lambda_{1}$.
\end{remark}

\subsection{The three-dimensional case}

The following theorems analyze the control sets in $\mathbb{R}^{3}$ when the
uncontrolled system undergoes a homoclinic bifurcation in the situation of
Theorem \ref{Theorem2.2} and Theorem \ref{Theorem2.3}.

If the uncontrolled system $\dot{x}=f_{0}(0,x)$ satisfies the hypotheses of
Theorem \ref{Theorem2.2}(i), it has an orbit $\Gamma_{0}$ homoclinic to a
saddle equilibrium point $x_{0}=0$, and $\dot{x}=f_{0}(\alpha,x)$ undergoes a
homoclinic bifurcation with saddle quantity $\sigma_{0}=\lambda_{1}%
(0)+\lambda_{2}(0)<0$, a split function $\beta(\alpha)$ with $\beta^{\prime
}(0)\not =0$, and bifurcating unique and asymptotically stable limit cycles
$L_{\beta}$ defined for $0<\left\vert \beta\right\vert <\overline{\beta}$ and
we may write $\alpha=\alpha(\beta)$. We use the notation from Theorem
\ref{Theorem2.2}.

\begin{theorem}
\label{Theorem4.5}Consider a family of control-affine systems in
$\mathbb{R}^{3}$ of the form (\ref{4.1}) and suppose that the accessibility
rank condition (\ref{ARC}) holds for $\alpha_{0}=0$ and that the control
system satisfies the inner pair condition (\ref{pair}) for all $x\in
\mathbb{R}^{3}$. Assume that the uncontrolled system $\dot{x}=f_{0}(0,x)$ has
an orbit $\Gamma_{0}$ homoclinic to a saddle $x_{0}=0$ with real eigenvalues
$\lambda_{1}(0)>0>\lambda_{2}(0)>\lambda_{3}(0)$, that $\Gamma_{0}\cup x_{0}$
is a maximal chain transitive set and the assumptions of Theorem
\ref{Theorem2.2}(i) are satisfied.

(i) Then there is a family of control sets $D_{0}^{\alpha(\beta),\rho}$,
defined for $\rho>0$ and $\beta\in\left(  -\beta_{0}(\rho),\beta_{0}%
(\rho)\right)  $ with $\beta_{0}(\rho)\in(0,\overline{\beta})$, satisfying for
all $\rho$ and $\beta$%
\begin{equation}
\Gamma_{0}\cup x_{0}\subset\mathrm{int}D_{0}^{\alpha(\beta),\rho}\text{ and
}\Gamma_{0}\cup x_{0}=\bigcap\nolimits_{\rho>0}D_{0}^{0,\rho}. \label{3d.1}%
\end{equation}

(ii) There is a family of control sets $D_{1}^{\alpha(\beta),\rho}$, defined
for $\rho>0$ and $\beta\in(0,\overline{\beta})$, satisfying for all $\rho$ and
$\beta$%
\begin{equation}
L_{\beta}\subset\mathrm{int}D_{1}^{\alpha(\beta),\rho}\text{ and }L_{\beta
}=\bigcap\nolimits_{\rho>0}D_{1}^{\alpha(\beta),\rho}. \label{3d.2}%
\end{equation}
Furthermore, for every $\beta\in(0,\overline{\beta})$ there is $\rho_{1}%
(\beta)$ such that for every $\rho\in(0,\rho_{1}(\beta)]$ the set
$D_{1}^{\alpha(\beta),\rho}$ is an invariant control set.
\end{theorem}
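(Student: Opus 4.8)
The plan is to transcribe the proof of Theorem \ref{Theorem4.1} almost verbatim, using Theorem \ref{Theorem2.2}(i) in place of the Andronov--Leontovich theorem; the three structural tools needed there---Theorem \ref{Theorem_inner} on control sets for small control ranges, Theorem \ref{Theorem_parameter1} on the dependence of control sets on an external parameter, and the bifurcation description of the uncontrolled flow---are all available in the three-dimensional saddle case of Theorem \ref{Theorem2.2}.

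For assertion (i), I would first note that $\Gamma_0\cup x_0$ is a compact maximal chain transitive set for the flow of $\dot x=f_0(0,x)$, that the accessibility rank condition (\ref{ARC}) holds at $\alpha_0=0$, and that the inner pair condition (\ref{pair}) holds there. Theorem \ref{Theorem_inner} then yields an increasing family of control sets $D_0^{0,\rho}$, $\rho>0$, of (\ref{4.1}) at $\alpha_0=0$ with $\Gamma_0\cup x_0\subset\mathrm{int}D_0^{0,\rho}$ and $\Gamma_0\cup x_0=\bigcap_{\rho>0}D_0^{0,\rho}$. Fixing $\rho>0$ and applying Theorem \ref{Theorem_parameter1} to a compact neighborhood of $\Gamma_0\cup x_0$ contained in $\mathrm{int}D_0^{0,\rho}$, one gets a unique lower semicontinuous family of control sets $D_0^{\alpha,\rho}$ for $|\alpha|<\alpha_0(\rho)$ still containing $\Gamma_0\cup x_0$ in the interior; putting $\beta_0(\rho):=\beta(\alpha_0(\rho))$ and using that $\alpha=\alpha(\beta)$ is well defined for small $|\beta|$ gives (\ref{3d.1}).

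For assertion (ii), Theorem \ref{Theorem2.2}(i) provides a neighborhood $U_0$ of $\Gamma_0\cup x_0$ in which, for all sufficiently small $\beta>0$, the uncontrolled system $\dot x=f_0(\alpha(\beta),x)$ has a unique and asymptotically stable limit cycle $L_\beta$. Since an asymptotically stable periodic orbit is a compact maximal chain transitive set (at least inside an isolating neighborhood contained in $U_0$), Theorem \ref{Theorem_inner} again produces an increasing family of control sets $D_1^{\alpha(\beta),\rho}$, $\rho>0$, with $L_\beta\subset\mathrm{int}D_1^{\alpha(\beta),\rho}$ and $L_\beta=\bigcap_{\rho>0}D_1^{\alpha(\beta),\rho}$. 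The final clause of Theorem \ref{Theorem_inner}, applicable precisely because $L_\beta$ is asymptotically stable, then yields $\rho_1(\beta)>0$ such that $D_1^{\alpha(\beta),\rho}$ is invariant for all $\rho\in(0,\rho_1(\beta)]$.

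I expect no genuinely new analytic difficulty here; the one point needing care is that neither $\Gamma_0\cup x_0$ nor $L_\beta$ need be a \emph{globally} maximal chain transitive set, so, as in the remark following Theorem \ref{Theorem4.1}, the sets $D_0^{\alpha(\beta),\rho}$ and $D_1^{\alpha(\beta),\rho}$ may in general only be local control sets relative to suitable isolating neighborhoods, and Theorems \ref{Theorem_inner} and \ref{Theorem_parameter1} then have to be applied in their local-control-set versions, valid by the discussion following Definition \ref{Definition_local}. One should also check that the inner pair condition (\ref{pair}) is invoked only at $\beta=0$ and for the small $\beta>0$ occurring in (ii), both covered by the hypotheses; the saddle structure of $x_0$ and condition (H1) of Theorem \ref{Theorem2.2} enter only through the cited bifurcation statement, not through the controllability arguments.
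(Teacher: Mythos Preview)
Your proposal is correct and follows essentially the same approach as the paper's own proof: apply Theorem \ref{Theorem_inner} to $\Gamma_0\cup x_0$ at $\alpha_0=0$, then Theorem \ref{Theorem_parameter1} to perturb in $\alpha$ for part (i); and for part (ii) invoke Theorem \ref{Theorem2.2}(i) to obtain the asymptotically stable limit cycle $L_\beta$, then apply Theorem \ref{Theorem_inner} again both for (\ref{3d.2}) and for the invariance clause. Your additional remarks on local control sets and on where the inner pair condition is actually used are accurate elaborations but do not change the argument.
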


\begin{proof}
(i) The set $\Gamma_{0}\cup x_{0}$ is a maximal chain transitive set for the
uncontrolled equation $\dot{x}=f_{0}(0,x)$. Theorem \ref{Theorem_inner} shows
that there is an increasing family of control sets $D_{0}^{0,\rho},\rho>0$, of
(\ref{4.1}) with $\alpha_{0}=0$ such that%
\[
\Gamma_{0}\cup x_{0}\subset\mathrm{int}D_{0}^{0,\rho}\text{ and }\Gamma
_{0}\cup x_{0}=\bigcap\nolimits_{\rho>0}D_{0}^{0,\rho}.
\]
Theorem \ref{Theorem_parameter1} shows that for every $\rho>0$ and some
$\alpha_{0}(\rho)>0$ there is a unique lower semicontinuous family of control
sets $D_{0}^{\alpha,\rho}$ with parameters $\left\vert \alpha\right\vert
<\alpha_{0}(\rho)$ containing $\Gamma_{0}\cup x_{0}$ in the interior. With
$\beta_{0}(\rho)=\beta(\alpha_{0}(\rho))$ assertion (i) follows.

(ii) By Theorem \ref{Theorem2.2}(i) there is a neighborhood $U_{0}$ of
$\Gamma_{0}\cup x_{0}$ in which a unique and asymptotically stable limit cycle
$L_{\beta},\beta\in(0,\overline{\beta})$, bifurcates from $\Gamma_{0}$. The
limit cycle $L_{\beta}$ is a maximal chain transitive set for the uncontrolled
equation $\dot{x}=f_{0}(\alpha(\beta),x)$, hence Theorem \ref{Theorem_inner}
shows that for every limit cycle $L_{\beta}$ there is an increasing family of
control sets $D_{1}^{\alpha(\beta),\rho},\rho>0$, of (\ref{4.1}) with%
\[
L_{\beta}\subset\mathrm{int}D_{1}^{\alpha(\beta),\rho}\text{ and }L_{\beta
}=\bigcap\nolimits_{\rho>0}D_{1}^{\alpha(\beta),\rho}.
\]
Theorem \ref{Theorem_inner} shows that the control sets $D_{1}^{\alpha
(\beta),\rho}$ containing the asymptotically stable limit cycle $L_{\beta}$
are invariant for $\rho>0$, small enough.
\end{proof}

Similarly one obtains the following result if the assumptions of Theorem
\ref{Theorem2.2}(ii), in particular, $\sigma_{0}>0$, are satisfied, and hence
a unique saddle limit cycle $L_{\beta}$ bifurcates from the homoclinic orbit
$\Gamma_{0}$ in a neighborhood $U_{0}$ of $\Gamma_{0}\cup x_{0}$.

\begin{theorem}
\label{Theorem4.6}In the situation of Theorem \ref{Theorem4.5} suppose that
the uncontrolled system $\dot{x}=f_{0}(0,x)$ satisfies the assumptions of
Theorem \ref{Theorem2.2}(ii).

(i) Then there is a family of control sets $D_{0}^{\alpha(\beta),\rho}$
defined for $\rho>0$ and $\beta\in\left(  -\beta_{0}(\rho),\beta_{0}%
(\rho)\right)  $ with $\beta_{0}(\rho)\in(0,\overline{\beta})$ such that
(\ref{3d.1}) holds for all $\rho$ and $\beta$.

(ii) If $\Gamma_{0}$ is simple, there is a family of control sets
$D_{1}^{\alpha(\beta),\rho}$ defined for $\rho>0$ and $\beta\in(-\overline
{\beta},0)$ such that (\ref{3d.2}) holds for all $\rho$ and $\beta$.

(iii) If $\Gamma_{0}$ is twisted, there is a family of control sets
$D_{1}^{\alpha(\beta),\rho}$ defined for $\rho>0$ and $\beta\in(0,\overline
{\beta})$ such that (\ref{3d.2}) holds for all $\rho$ and $\beta$.
\end{theorem}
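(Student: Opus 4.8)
The plan is to run, almost verbatim, the argument of Theorem \ref{Theorem4.5}: the hypotheses on the control system (accessibility rank condition at $\alpha_{0}=0$, the inner pair condition (\ref{pair}) at every $x\in\mathbb{R}^{3}$, and $\Gamma_{0}\cup x_{0}$ a maximal chain transitive set) are the same, and the only structural change is that the cycle bifurcating from $\Gamma_{0}$ is now a saddle cycle, hence unstable, and, in the simple case, it occupies the side $\beta<0$ of the bifurcation rather than $\beta>0$.

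For (i) I would argue exactly as in the proof of Theorem \ref{Theorem4.5}(i). Since $\Gamma_{0}\cup x_{0}$ is a maximal chain transitive set for $\dot{x}=f_{0}(0,x)$ and the accessibility and inner pair hypotheses hold, Theorem \ref{Theorem_inner} produces an increasing family $D_{0}^{0,\rho}$, $\rho>0$, of control sets of (\ref{4.1}) with $\alpha_{0}=0$ satisfying $\Gamma_{0}\cup x_{0}\subset\mathrm{int}D_{0}^{0,\rho}$ and $\Gamma_{0}\cup x_{0}=\bigcap_{\rho>0}D_{0}^{0,\rho}$. Then Theorem \ref{Theorem_parameter1}, applied with nominal value $\alpha_{0}=0$, gives for each $\rho>0$ a bound $\alpha_{0}(\rho)>0$ and a unique lower semicontinuous family $D_{0}^{\alpha,\rho}$, $\left\vert \alpha\right\vert <\alpha_{0}(\rho)$, with $\Gamma_{0}\cup x_{0}$ in its interior; re-parametrizing by the inverse split function $\alpha=\alpha(\beta)$ and putting $\beta_{0}(\rho):=\beta(\alpha_{0}(\rho))$ yields (\ref{3d.1}). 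This step uses nothing about the sign of $\sigma_{0}$ nor the simple/twisted dichotomy, so it serves both (ii) and (iii).

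For the cycle families I would invoke Theorem \ref{Theorem2.2}(ii): in a neighborhood $U_{0}$ of $\Gamma_{0}\cup x_{0}$ a unique saddle limit cycle $L_{\beta}$ bifurcates from $\Gamma_{0}$, existing for small $\beta<0$ when $\Gamma_{0}$ is simple and for small $\beta>0$ when $\Gamma_{0}$ is twisted. In either regime $L_{\beta}$ is a compact chain transitive set for $\dot{x}=f_{0}(\alpha(\beta),x)$ which is maximal once one passes to an isolating neighborhood; hence Theorem \ref{Theorem_inner}, in its version for local control sets (cf. Definition \ref{Definition_local} and the remark following Theorem \ref{Theorem4.1}), yields an increasing family $D_{1}^{\alpha(\beta),\rho}$, $\rho>0$, with $L_{\beta}\subset\mathrm{int}D_{1}^{\alpha(\beta),\rho}$ and $L_{\beta}=\bigcap_{\rho>0}D_{1}^{\alpha(\beta),\rho}$, that is, (\ref{3d.2}), for $\beta\in(-\overline{\beta},0)$ in case (ii) and for $\beta\in(0,\overline{\beta})$ in case (iii).

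I do not expect a serious obstacle; the one genuine difference from Theorem \ref{Theorem4.5} is that $L_{\beta}$ is unstable, so the last clause of Theorem \ref{Theorem_inner} is unavailable and $D_{1}^{\alpha(\beta),\rho}$ need not be invariant, consistently with the fact that Theorem \ref{Theorem4.6} makes no invariance claim. The remaining points are routine: (\ref{pair}) is available along $L_{\beta}\subset U_{0}$ for every $\rho>0$ since it was assumed at all points of $\mathbb{R}^{3}$; and, because Theorem \ref{Theorem2.2} describes the flow only inside $U_{0}$, the families $D_{0}^{\alpha(\beta),\rho}$ and $D_{1}^{\alpha(\beta),\rho}$ may in general be local control sets rather than global ones, exactly as remarked after Theorem \ref{Theorem4.1}. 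The simple/twisted distinction affects only which side of $\beta=0$ the cycle lies on; the underlying band-versus-M\"{o}bius-band geometry is already encoded in Theorem \ref{Theorem2.2}(ii), so the controllability argument does not interact with it.
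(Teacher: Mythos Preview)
Your proposal is correct and follows essentially the same approach as the paper: the paper's own proof simply states that one repeats the steps of Theorem~\ref{Theorem4.5}, using that the bifurcating cycle $L_{\beta}$ exists for $\beta<0$ in the simple case and for $\beta>0$ in the twisted case. Your added observations---that the saddle nature of $L_{\beta}$ precludes an invariance claim, and that the control sets may only be local---are accurate and consistent with the paper's remarks elsewhere.
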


\begin{proof}
The proof of this theorem follows the same steps as the proof of Theorem
\ref{Theorem4.5}. One has to use that for simple $\Gamma_{0}$ the bifurcating
limit cycles $L_{\beta}$ exist for $\beta<0$ and for twisted $\Gamma_{0}$ they
exist for $\beta>0$.
\end{proof}

The remarkable result here is that the direction of bifurcation for the
control sets $D_{1}^{\alpha(\beta),\rho}$ depends on topological property if
the stable manifold $W^{s}$ of $x_{0}$ is simple or twisted.

Finally, we obtain the following result for a homoclinic bifurcation of a
saddle-focus with $\sigma_{0}<0$ as described in Theorem \ref{Theorem2.3}.

\begin{theorem}
\label{Theorem4.7}Consider a family of control-affine systems in
$\mathbb{R}^{3}$ of the form (\ref{4.1}) and suppose that the accessibility
rank condition (\ref{ARC}) holds for $\alpha_{0}=0$ and that the control
system satisfies the inner pair condition (\ref{pair}) for all $x\in
\mathbb{R}^{3}$. Assume that the uncontrolled system $\dot{x}=f_{0}(0,x)$ has
an orbit $\Gamma_{0}$ homoclinic to a saddle-focus $x_{0}=0$ satisfying the
assumptions of Theorem \ref{Theorem2.3}, and $\Gamma_{0}\cup x_{0}$ is a
maximal chain transitive set.

(i) Then there is a family of control sets $D_{0}^{\alpha(\beta),\rho}$
defined for $\rho>0$ and $\beta\in\left(  -\beta_{0}(\rho),\beta_{0}%
(\rho)\right)  $ with $\beta_{0}(\rho)\in(0,\overline{\beta})$ such that
(\ref{3d.1}) holds for all $\rho$ and $\beta$.

(ii) There is a family of control sets $D_{1}^{\alpha(\beta),\rho}$ defined
for $\rho>0$ and $\beta\in(0,\overline{\beta})$ such that (\ref{3d.2}) holds
for all $\rho$ and $\beta$. Furthermore, for every $\beta\in(0,\overline
{\beta})$ there is $\rho_{1}(\beta)$ such that for every $\rho\in(0,\rho
_{1}(\beta)]$ the set $D_{1}^{\alpha(\beta),\rho}$ is an invariant control set.
\end{theorem}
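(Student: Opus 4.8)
The plan is to mirror the proof of Theorem~\ref{Theorem4.5}, replacing only the structure-specific inputs: the uncontrolled system now has an orbit $\Gamma_{0}$ homoclinic to a saddle-focus with $\sigma_{0}<0$, and by Theorem~\ref{Theorem2.3} the bifurcating limit cycle $L_{\beta}$ is unique and asymptotically stable, existing precisely for small $\beta>0$ with no periodic orbits for $\beta\leq0$. As in the earlier proofs, we may write $\alpha=\alpha(\beta)$ for $|\beta|$ small, since $\beta^{\prime}(0)\neq0$ makes the split function invertible; thus $(\beta,\rho)$ serve as the parameters.

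For part (i), I would argue exactly as in Theorem~\ref{Theorem4.5}(i): the set $\Gamma_{0}\cup x_{0}$ is a compact maximal chain transitive set for the uncontrolled flow $\dot{x}=f_{0}(0,x)$, the accessibility rank condition holds at $\alpha_{0}=0$, and the inner pair condition~(\ref{pair}) is assumed; hence Theorem~\ref{Theorem_inner} yields an increasing family $D_{0}^{0,\rho}$, $\rho>0$, with $\Gamma_{0}\cup x_{0}\subset\mathrm{int}\,D_{0}^{0,\rho}$ and $\Gamma_{0}\cup x_{0}=\bigcap_{\rho>0}D_{0}^{0,\rho}$. Then Theorem~\ref{Theorem_parameter1} gives, for each fixed $\rho>0$, a unique lower semicontinuous family of control sets $D_{0}^{\alpha,\rho}$ for $|\alpha|<\alpha_{0}(\rho)$ still containing $\Gamma_{0}\cup x_{0}$ in the interior; setting $\beta_{0}(\rho)=\beta(\alpha_{0}(\rho))$ and passing to the parameter $\beta$ establishes~(\ref{3d.1}).

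For part (ii), I would invoke Theorem~\ref{Theorem2.3}: for $\beta\in(0,\overline{\beta})$ there is a neighborhood $U_{0}$ of $\Gamma_{0}\cup x_{0}$ containing a unique asymptotically stable limit cycle $L_{\beta}$, which is a compact maximal chain transitive set for $\dot{x}=f_{0}(\alpha(\beta),x)$. Applying Theorem~\ref{Theorem_inner} to this chain transitive set produces the increasing family $D_{1}^{\alpha(\beta),\rho}$, $\rho>0$, with $L_{\beta}\subset\mathrm{int}\,D_{1}^{\alpha(\beta),\rho}$ and $L_{\beta}=\bigcap_{\rho>0}D_{1}^{\alpha(\beta),\rho}$, i.e.~(\ref{3d.2}). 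The invariance statement follows from the last clause of Theorem~\ref{Theorem_inner}, since $L_{\beta}$ is asymptotically stable: there is $\rho_{1}(\beta)>0$ so that $D_{1}^{\alpha(\beta),\rho}$ is invariant for all $\rho\in(0,\rho_{1}(\beta)]$.

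The only real point requiring care — and the place a careful reader should check — is that the abstract inputs genuinely apply in the saddle-focus case: namely that Theorem~\ref{Theorem_inner} is valid even though $\Gamma_{0}\cup x_{0}$ may be maximal chain transitive only within an isolating neighborhood (so that $D_{0}^{\alpha(\beta),\rho}$ may be a \emph{local} control set, as noted in the remark following Theorem~\ref{Theorem4.1}), and that the notion of split function and the hypothesis $\beta^{\prime}(0)\neq0$ indeed make sense in $\mathbb{R}^{3}$ for a one-dimensional unstable manifold, which is recalled in Section~\ref{Section2}. Beyond that, the proof is a routine transcription of the two-step scheme (Theorem~\ref{Theorem_inner} then Theorem~\ref{Theorem_parameter1}) already used for Theorems~\ref{Theorem4.1}, \ref{Theorem4.5}, and~\ref{Theorem4.6}, so I would simply write: ``The proof follows the same steps as the proof of Theorem~\ref{Theorem4.5}, now using Theorem~\ref{Theorem2.3} in place of Theorem~\ref{Theorem2.2}(i).''
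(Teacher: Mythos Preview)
Your proposal is correct and matches the paper's own proof, which simply states that the argument follows the same steps as Theorem~\ref{Theorem4.5} and uses that the bifurcating limit cycles are asymptotically stable to obtain invariant control sets. Your write-up is in fact more detailed than the paper's (and you correctly record that $L_{\beta}$ exists for $\beta>0$, whereas the paper's proof contains a sign slip writing $\beta<0$).
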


\begin{proof}
The proof of this theorem follows the same steps as the proof of Theorem
\ref{Theorem4.5}.One also has to use that the bifurcating limit cycles
$L_{\beta},\beta<0$, are asymptotically stable, hence in assertion (ii) one
obtains invariant control sets.
\end{proof}

\begin{remark}
In all situations analyzed in Theorems \ref{Theorem4.5}, Theorem
\ref{Theorem4.6}, and Theorem \ref{Theorem4.7}, one can obtain results
analogous to Corollary \ref{Corollary_relation}, and to Theorem
\ref{Theorem_homoclinic} on the existence of controlled homoclinic orbits in
parameter regions where no periodic solutions exist for the uncontrolled
system. This holds, since the corresponding proofs do not use that the
dimension of the state space is two.
\end{remark}

\section{An example\label{Section5}}

Consider the following planar control system%
\begin{align}
\dot{x}  &  =-x+2y+x^{2}\label{5.1}\\
\dot{y}  &  =(2-\alpha)x-y-3x^{2}+\frac{3}{2}xy+u(t)\nonumber
\end{align}
with $u(t)\in U=[-\rho,\rho],\,\rho>0$. This is a special case of (\ref{4.1})
with
\[
f_{0}(\alpha,x,y)=\left[
\begin{array}
[c]{c}%
-x+2y+x^{2}\\
(2-\alpha)x-y-3x^{2}+\frac{3}{2}xy
\end{array}
\right]  ,\quad f_{1}(x,y)=\left[
\begin{array}
[c]{c}%
0\\
1
\end{array}
\right]  .
\]
For $u=0$ one obtains Sandstede's example of a homoclinic bifurcation, cf.
Sandstede \cite{Sand}, Kuznetsov \cite[Example 6.1]{Y1998}. For an application
of Theorem \ref{Theorem4.1}, we first check that this uncontrolled system
suffers a homoclinic bifurcation according to Theorem \ref{th:Andr}.

The origin $(x_{0},y_{0})=(0,0)$ is an equilibrium for all $\alpha$ and it is
a saddle for sufficiently small $\left\vert \alpha\right\vert $. For
$\alpha_{0}=0$ one obtains $\sigma_{0}=\lambda_{1}(0)+\lambda_{2}%
(0)=1-3=-2<0$. One can show that there is a homoclinic orbit contained in the
set of all $(x,y)$ with%
\[
x^{2}(1-x)-y^{2}=0,
\]
hence $y=\pm x\sqrt{1-x}$ for all points $(x,y)$ on the homoclinic orbit. As
noted above, the condition $\beta^{\prime}(0)\not =0$ is equivalent to the
Melnikov condition (\ref{M6.25}), which here has the form%
\[
M_{\alpha_{0}}(0)=-\int_{-\infty}^{\infty}\exp\left[  -\int_{0}^{t}\left(
-2+\frac{7}{2}x\right)  d\tau\right]  x\dot{x}dt.
\]
Write the first component of the homoclinic trajectory for $y>0$ as
$x(t)=x^{+}(t)$ and for $y<0$ as $x(t)=x^{-}(t)$. Thus the equation for
$x(\cdot)$ can be written as%
\[
\dot{x}^{+}=x(x-1+2\sqrt{1-x})>0,~\dot{x}^{-}=-x(x-1-2\sqrt{1-x})<0.
\]
Observe that $(x_{1},y_{1})=(1,0)$ is on the homoclinic orbit and we may
suppose that the homoclinic solution satisfies $x(0)=x_{1}=1$. Define
$h(t)=\exp\left[  -\int_{0}^{t}\left(  -2+\frac{7}{2}x\right)  d\tau\right]
,\allowbreak t\in\mathbb{R}$. Then the integral for $M_{\alpha}(0)$ can be
written as%
\[
\int_{-\infty}^{0}h(t)x\dot{x}dt+\int_{0}^{\infty}h(t)x\dot{x}dt=\int_{0}%
^{1}h(t^{+}(x^{+}))x^{+}dx^{+}+\int_{0}^{1}h(t^{-}(x^{-}))x^{-}dx^{-}>0,
\]
showing that $M_{\alpha}(0)\not =0$. Hence Theorem \ref{th:Andr} implies that
the uncontrolled equation has an asymptotically stable limit cycle for
$\alpha>0$.

Next we check the assumptions of Theorem \ref{Theorem4.1}. One computes for
$\alpha_{0}=0$%
\[
ad_{f_{0}}f_{1}(x,y)=[f_{0},f_{1}](x,y)=-\left[
\begin{array}
[c]{cc}%
\frac{\partial f_{01}}{\partial x} & \frac{\partial f_{01}}{\partial y}\\
\frac{\partial f_{02}}{\partial x} & \frac{\partial f_{02}}{\partial y}%
\end{array}
\right]  \left[
\begin{array}
[c]{c}%
0\\
1
\end{array}
\right]  =-\left[
\begin{array}
[c]{c}%
2\\
-1+\frac{3}{2}x
\end{array}
\right]  ,
\]
where $f_{0}=(f_{01},f_{02})^{\top}$. One finds that $f_{1}(x,y)=(0,1)^{\top}$
and $ad_{f_{0}}f_{1}(x,y)$ are linearly independent for all $\alpha$. Thus
condition (\ref{NamA}) holds implying the inner pair condition (\ref{pair})
and the accessibility rank condition (\ref{ARC}) for all $(x,y)\in
\mathbb{R}^{2}$. Furthermore, also the controllability condition in Theorem
\ref{Theorem_homoclinic} holds, since the control system with $\alpha_{0}=0$
linearized in $(x,y)=(0,0),u=0$ is controllable,
\[
A:=\left[  \frac{\partial f_{0}(\alpha_{0},0,0)}{\partial x},\frac{\partial
f_{0}(\alpha_{0},0,0)}{\partial y}\right]  =\left[
\begin{array}
[c]{cc}%
-1 & 2\\
2 & -1
\end{array}
\right]  ,\quad B:=f_{1}(0,0)=\left[
\begin{array}
[c]{c}%
0\\
1
\end{array}
\right]  ,
\]
hence $\mathrm{rank}[B,AB]=\mathrm{rank}\left[
\begin{array}
[c]{cc}%
0 & 2\\
1 & -1
\end{array}
\right]  =2$.

Theorem \ref{Theorem4.1} implies that for $\rho>0$ there is $\beta_{0}%
(\rho)>0$ such that there is a family of control sets $D_{0}^{\alpha
(\beta),\rho}$ satisfying for all $\rho>0$ and $\beta\in(-\beta_{0}%
(\rho),\beta_{0}(\rho))$ assertion (\ref{4.1b}). Since $\sigma_{0}<0$ it also
follows that there is a family of control sets $D_{1}^{\alpha(\beta),\rho}$
such that for all $\rho>0$ and $\beta\in(0,\overline{\beta})$ assertion
(\ref{4.1a}) holds. Corollary \ref{Corollary_relation} shows that for every
$\beta\in(0,\overline{\beta})$ there is $\rho_{1}(\beta)$ such that
$D_{1}^{\alpha(\beta),\rho}$ is an invariant control set for $\rho\in
(0,\rho_{1}(\beta)]$. For $\alpha_{0}=0,u=0$ one finds the unique equilibrium
$(\bar{x},\bar{y})=(\frac{2}{3},\frac{1}{9})$ in the interior of the region
bounded by the homoclinic orbit, cf. Remark \ref{Remark_x_beta}. It is an
unstable focus, which for $\rho>0$ small enough is contained in the interior
of an open control set, since one can check the inner pair condition
(\ref{pair}).

Figures 1 -- 5 indicate phase portraits of the uncontrolled systems and
present numerical approximations of the control sets. These results are based
on H\"{a}ckl's algorithm for the computation of reachable and controllable
sets, cf. H\"{a}ckl \cite{G1992}, Colonius and Kliemann \cite[Appendix
C]{ColK00}. A reachable set is the union of all solutions from an initial
state corresponding to admissible control functions. The solutions are
approximated via discrete-time systems (obtained here by a Runge-Kutta method
RK$5(4)$). A space discretization via a grid (here of size $150\times150$
cells) allows us to keep track of those cells that have already been reached
by some computed solution. The controllable sets are obtained via time
reversal. The implementation of H\"{a}ckl's algorithm is based on MATLAB.

For the parameter values $\alpha_{0}=0$ and $\rho=0.01$, Figure 1 shows
approximations of the control sets $D_{0}^{\alpha_{0},\rho}$ around the
homoclinic orbit $\Gamma_{0}$ and $D_{2}^{\alpha_{0},\rho}$ around the
unstable focus. The control set $D_{0}^{\alpha_{0},\rho}$ is obtained by the
intersection of the reachable and controllable sets,%
\[
D_{0}^{\alpha_{0},\rho}=\mathrm{cl}\mathcal{O}^{+}(x_{1},y_{1})\cap
\mathcal{O}^{-}(x_{2},y_{2})\text{ for any }(x_{1},y_{1}),(x_{2},y_{2}%
)\in\Gamma_{0}\subset\mathrm{int}D_{1}^{\alpha_{0},\rho}\text{. }%
\]
Hence an approximation of $D_{0}^{\alpha,\rho}$ is obtained by the
intersection of numerical approximations for the reachable and controllable
sets. The control set $D_{2}^{\alpha_{0},\rho}$ around the unstable focus
$(\bar{x},\bar{y})$ is computed as the controllable set $\mathcal{O}^{-}%
(\bar{x},\bar{y})$.

For $\alpha=-0.017241,\,\rho=0.01$, Figure 2 shows the control set
$D_{0}^{\alpha,\rho}$ around $\Gamma_{0}$ and the control set $D_{2}%
^{\alpha,\rho}$ around the unstable focus. For this negative $\alpha$-value,
no periodic orbit has bifurcated from the homoclinic orbit of the uncontrolled
equation. This illustrates Theorem \ref{Theorem_homoclinic}. For
$\alpha=0.01,\,\rho=0.01$, Figure 3\textbf{ }shows the control set
$D_{2}^{\alpha,\rho}$ around the unstable focus, and the control set
$D_{0}^{\alpha,\rho}=D_{1}^{\alpha,\rho}$ containing the homoclinic orbit
$\Gamma_{0}$ and the periodic orbit $L_{\beta(\alpha)}$, cf. Corollary
\ref{Corollary_relation}(ii). For $\alpha=0.03,\,\rho=0.01$ Figure 4 shows the
control set $D_{2}^{\alpha,\rho}$ around the unstable focus, the control set
$D_{0}^{\alpha,\rho}$ containing $\Gamma_{0}$, and the invariant control set
$D_{1}^{\alpha,\rho}$ containing the stable periodic orbit $L_{\beta(\alpha)}$
computed as the reachable set. This illustrates Corollary
\ref{Corollary_relation}(iii). Finally, for $\alpha=0.07,\,\rho=0.01$, Figure
5 shows the invariant control set $D_{1}^{\alpha,\rho}$ around the stable
periodic orbit $L_{\beta(\alpha)}$, the control set $D_{2}^{\alpha,\rho}$
around the unstable focus, and the control set $D_{0}^{\alpha,\rho}$ which has
collapsed to a control set around the saddle close to the local control sets
$D_{loc}^{0,\rho}$ used in the proof of Theorem \ref{Theorem_homoclinic}.

\section{Conclusions and open problems\label{Section6}}

Our results show, in particular, that sometimes a homoclinic bifurcation may
lead to invariant control sets (for $d=2$ this is the case in Corollary
\ref{Corollary_relation} and for $d=3$ in Theorem \ref{Theorem4.5} and Theorem
\ref{Theorem4.7}). Invariant control sets are also of interest beyond control
and deterministic perturbations, since they are the supports of invariant
densities for associated Markov diffusion processes (cf. Kliemann
\cite{Kli87}). We did not include the case of a bifurcation in $\mathbb{R}%
^{3}$ for an orbit homoclinic to a saddle-focus with saddle quantity
$\sigma_{0}>0$. This bifurcation results in an infinite number of saddle limit
cycles, cf. Kuznetsov \cite[Theorem 6.6]{Y1998}. It certainly would be of
great interest to study the controllability properties in this situation and
also for general system in $\mathbb{R}^{d}$.

\newpage%

\[%
\raisebox{-0cm}{\parbox[b]{11.3177cm}{\begin{center}
\includegraphics[
height=7.5325cm,
width=11.3177cm
]%
{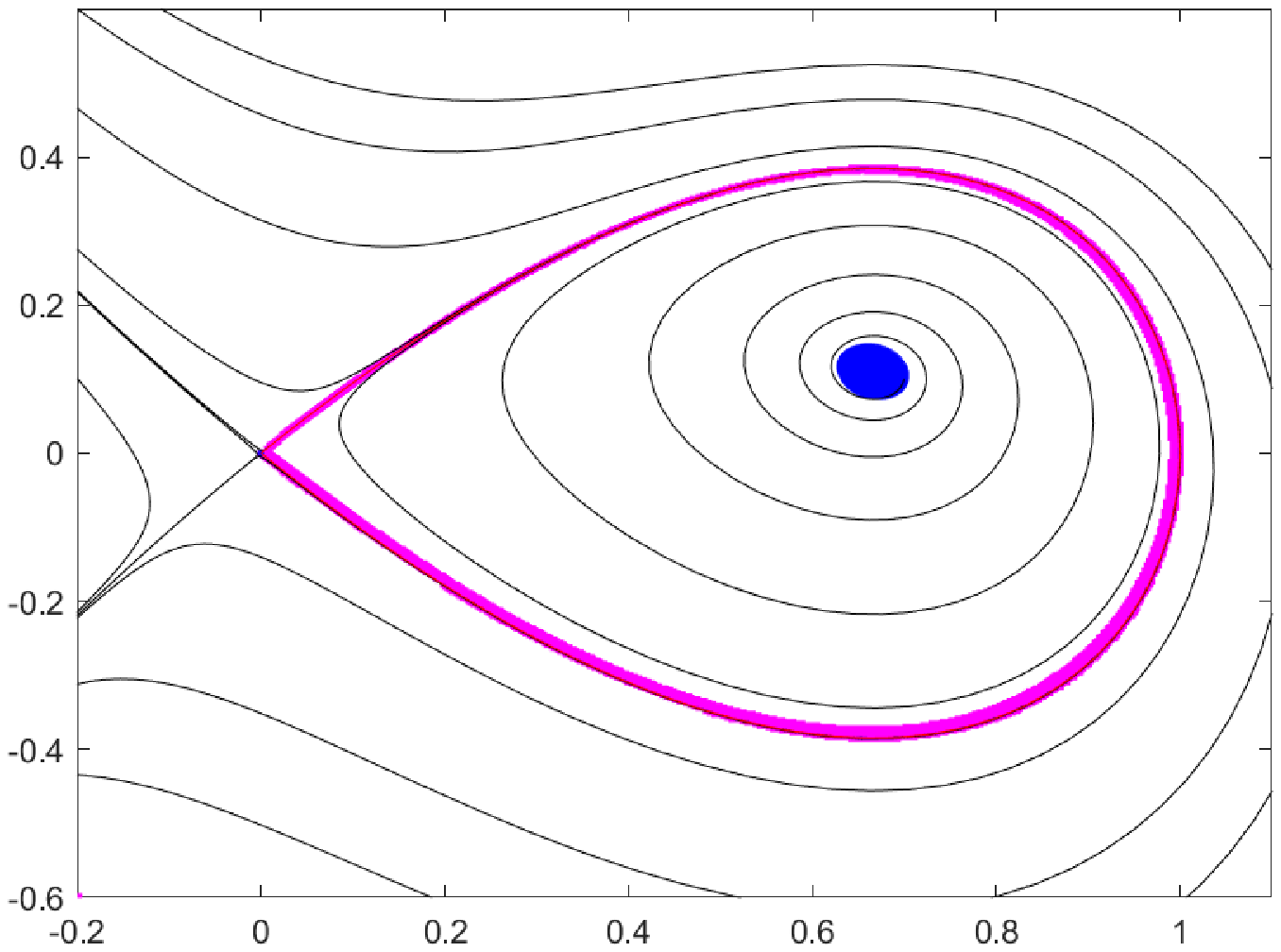}%
\\
Fig. 1: Phase portrait and control sets $D_0$ around the homoclinic orbit
$\Gamma_0$ and $D_2$ around the unstable focus for $\alpha_0=0.0$ and $%
\rho=0.01$
\end{center}}}
\]

\[%
\raisebox{-0cm}{\parbox[b]{11.2439cm}{\begin{center}
\includegraphics[
height=7.4839cm,
width=11.2439cm
]%
{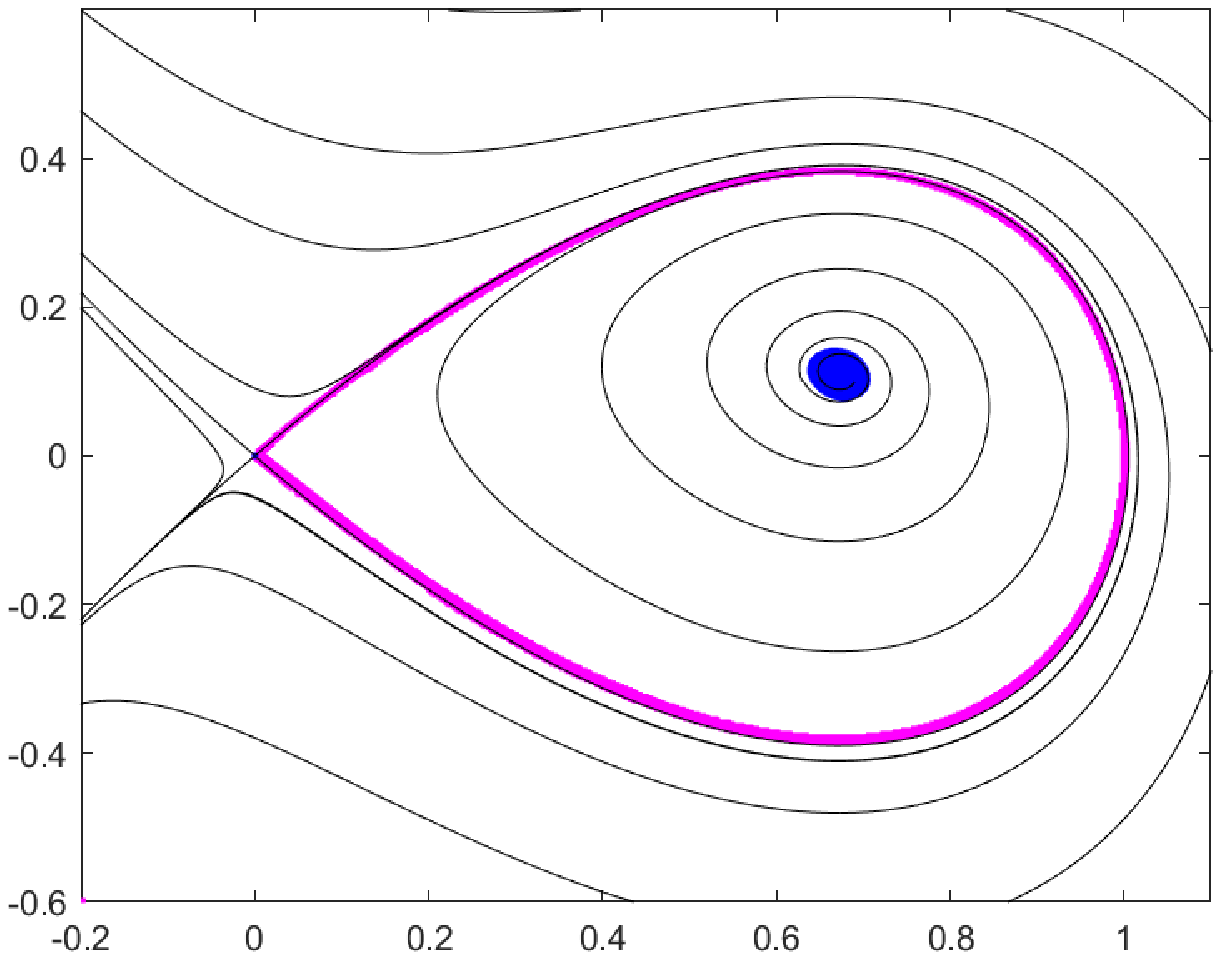}%
\\
Fig. 2: Phase portrait and control sets $D_0$ around the homoclinic orbit
$\Gamma_0$ and $D_2$ around the unstable focus for $\alpha=-0.017241,~\rho
=0.01$
\end{center}}}
\]

\[%
\raisebox{-0cm}{\parbox[b]{11.2439cm}{\begin{center}
\includegraphics[
height=7.4839cm,
width=11.2439cm
]%
{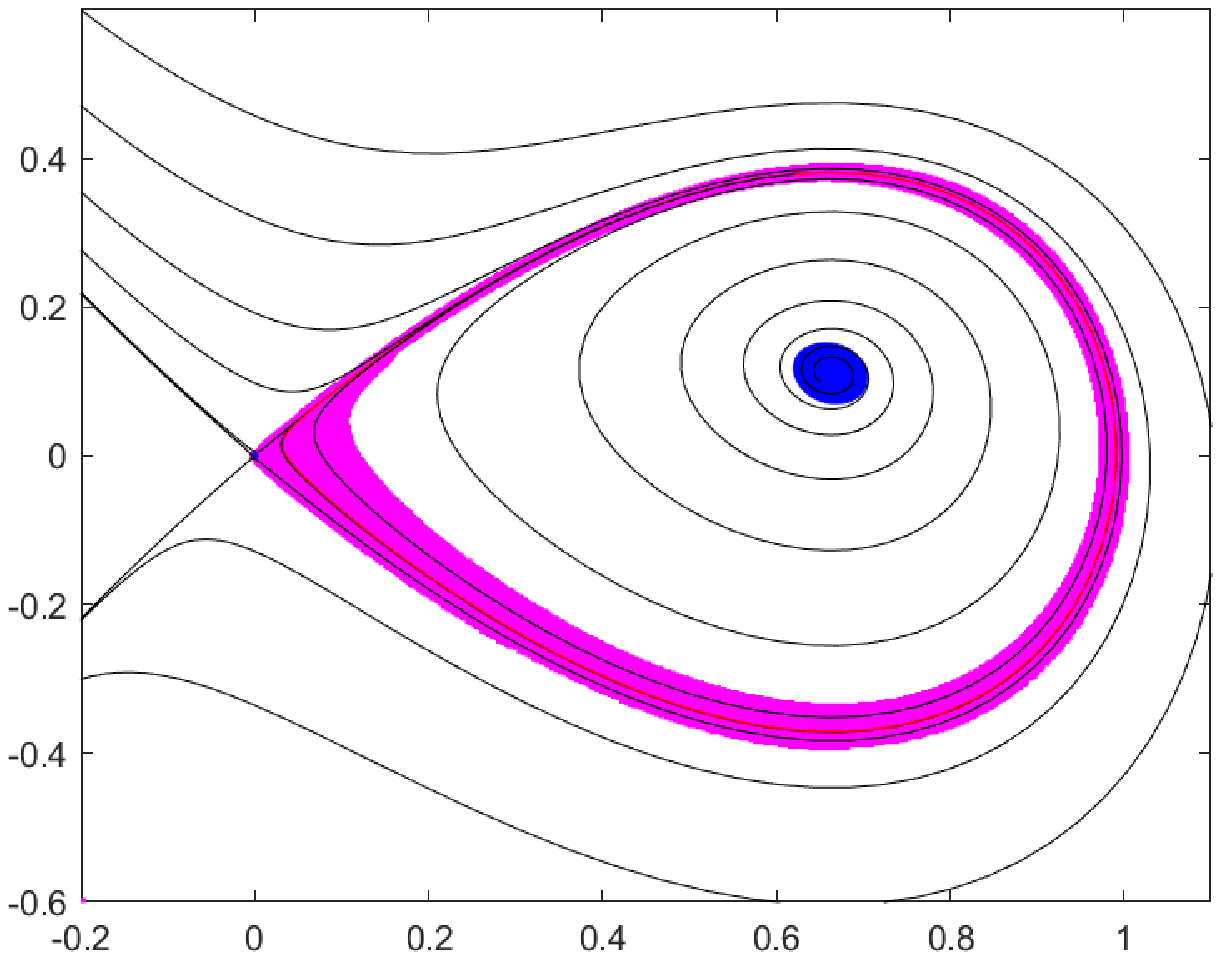}%
\\
Fig. 3: Phase portrait and control sets $D_0=D_1$ around $\Gamma_0$ and the
periodic orbit, and $D_2$ around the unstable focus for $\alpha=0.01,~\rho
=0.01$
\end{center}}}
\]

\[%
\raisebox{-0cm}{\parbox[b]{11.2439cm}{\begin{center}
\includegraphics[
height=7.4839cm,
width=11.2439cm
]%
{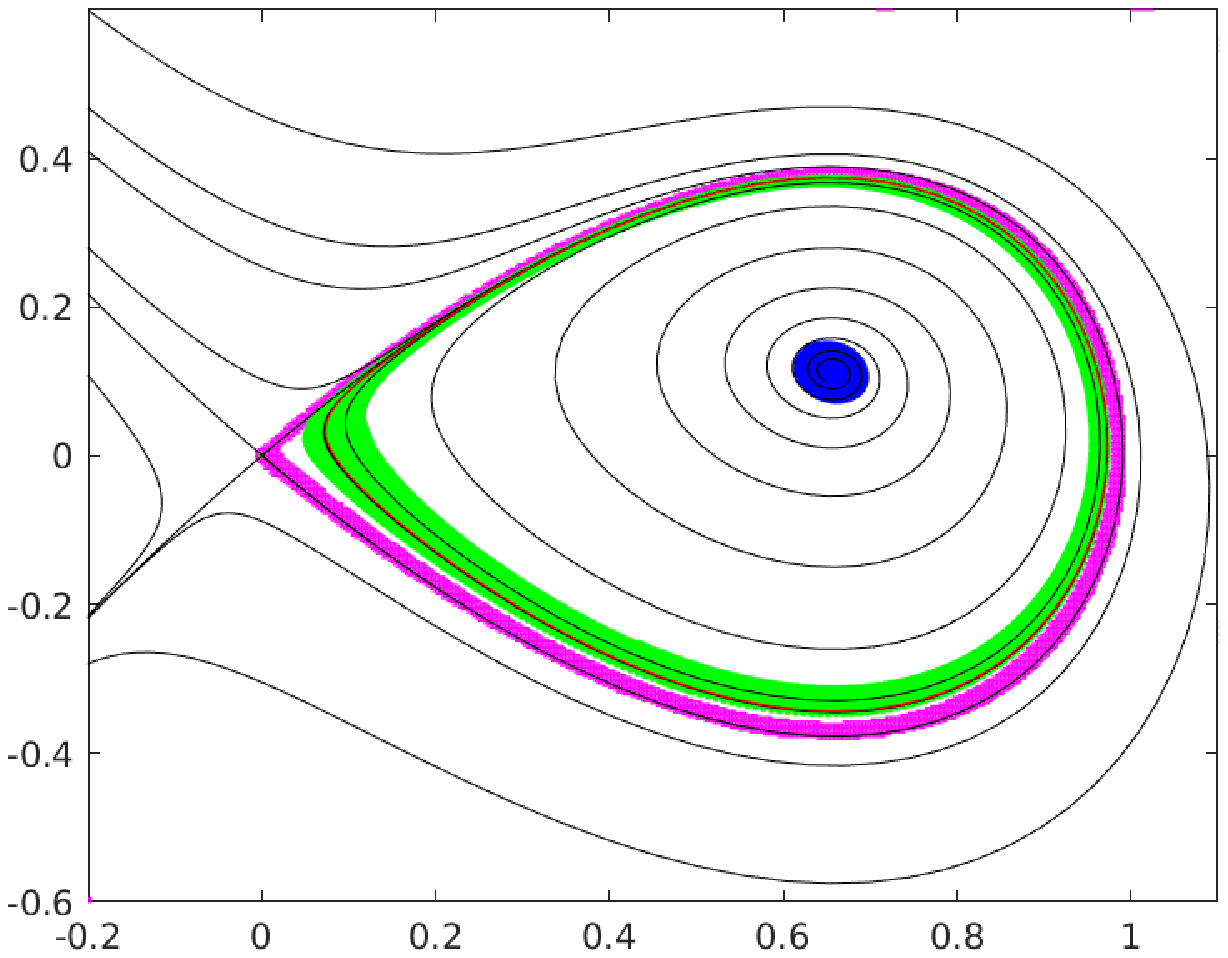}%
\\
Fig. 4: Phase portrait and control sets $D_0$ around $\Gamma_0$, $D_1$ around
the periodic orbit, and $D_2$ around the unstable focus for $\alpha
=0.03,~\rho=0.01$
\end{center}}}
\]

\[%
\raisebox{-0cm}{\parbox[b]{11.2439cm}{\begin{center}
\includegraphics[
height=7.4839cm,
width=11.2439cm
]%
{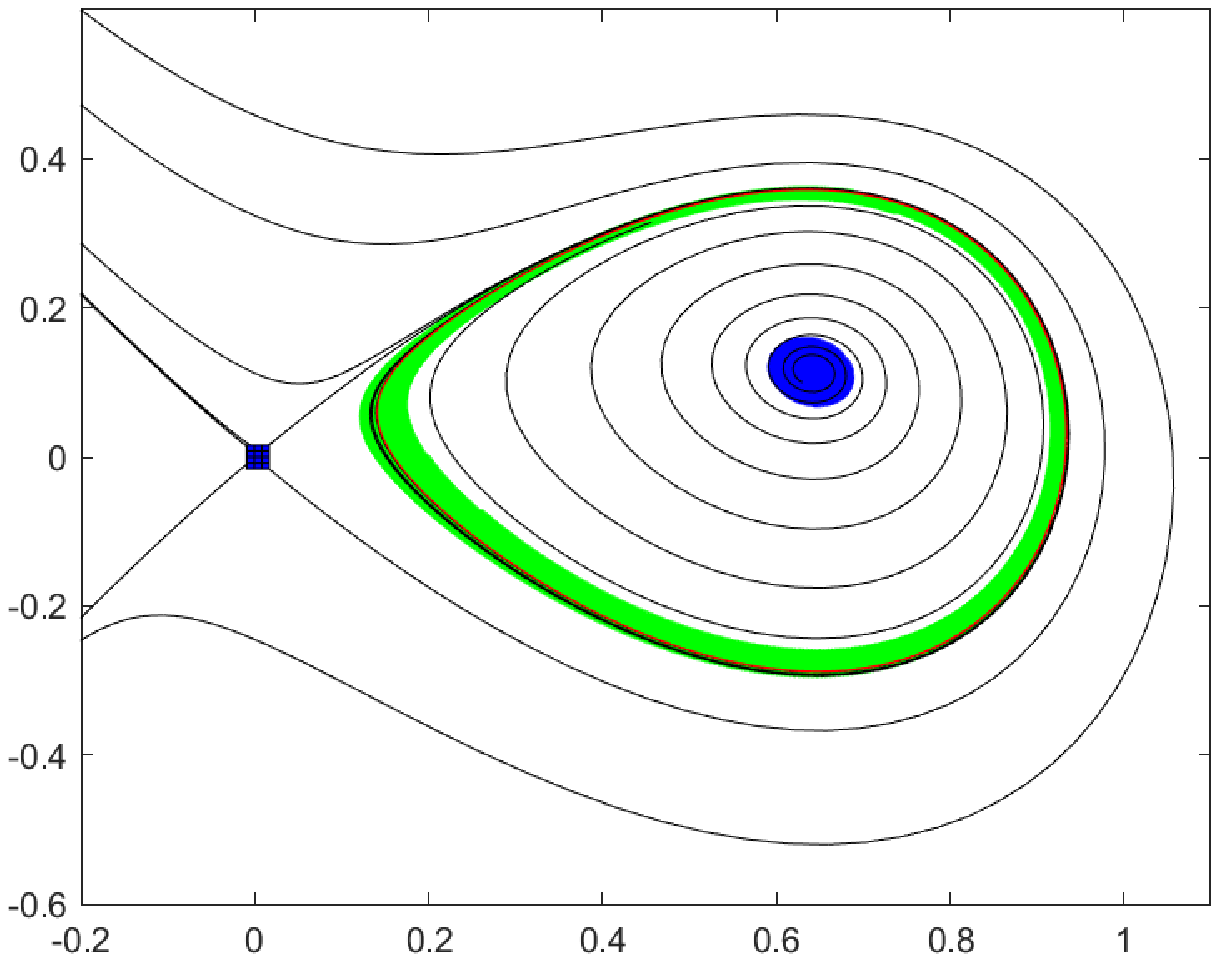}%
\\
Fig. 5: Phase portrait and control sets $D_0$ around the saddle, $D_1$ around
the periodic orbit, and $D_2$ around the unstable focus for $\alpha
=0.07,~\rho=0.01$
\end{center}}}
\]

\end{document}